\numberwithin{equation}{section} 
\theoremstyle{plain}
\newtheorem{thm}{Theorem}[section]
\newtheorem{cor}[thm]{Corollary}
\newtheorem{lemma}[thm]{Lemma}
\newtheorem{prop}[thm]{Proposition}
\theoremstyle{definition}
\newtheorem*{rmk}{Remark}
\newcommand{\diag}{\mathop{\mathrm{diag}}}
\newcommand{\PSL}{\mathrm{PSL}}
\newcommand{\tr}{\mathrm{Tr}}
\newcommand{\ad}{\mathrm{ad}}
\newcommand{\bbm}{\begin{bmatrix}}
\newcommand{\ebm}{\end{bmatrix}}
\DeclareMathOperator{\Aut}{\rm Aut}
\DeclareMathOperator{\cha}{\rm char}
\begin{document}

\title{Classical orthogonal decomposition of a modular $\mathfrak{sl}_n$}

\author{Yotsanan Meemark and Songpon Sriwongsa}

\address{Yotsanan Meemark \\ Department of Mathematics and Computer Scienc \\ Faculty of Science \\ Chulalongkorn University \\ Bangkok 10330, Thailand}
\email{\tt yotsanan.m@chula.ac.th}

\address{Songpon Sriwongsa \\ Department of Mathematics \\ Faculty of Science \\ King Mongkut's University of Technology Thonburi (KMUTT) \\ 126 Pracha Uthit Rd.\\ Bang Mod, Thung Khru \\ Bangkok 10140, Thailand}
\email {\tt songpon.sri@kmutt.ac.th}


\thanks{Note added in this arXiv version: we correct the conclusion about the uniqueness of COD of $\mathfrak{sl}_3$ over a finite field in Section 3 of the published version [J. Pure Appl. Algebra 228 (2024) 107721] by adding Theorem \ref{unique3} to this version. Additionally, we add the input of 
$m[i], n[i], k[i], l[i], s[i], t[i], x[i]$ and $y[i]$ in Appendix A: $A.1.$ Mathematica code for checking $J_3(1, z)$ and $J_3(1, 1)$.}

\begin{abstract}
An orthogonal decomposition problem of Lie algebras over the complex numbers has been studied since the 1980s. It has many applications and relations to other areas of mathematics and sciences. In this paper, we consider this decomposition problem over a field of prime characteristic. We define a classical orthogonal decomposition of a modular Lie algebra and construct it for $\mathfrak{sl}_n$ under certain sufficient conditions. Additionally, we provide more detailed analysis of the problem when $n = 2$ and $3$. 
\end{abstract}

\maketitle

\section{Introduction}\label{intro}
Throughout the paper, we denote the special linear Lie algebra of order $n$ over a field $\mathbb{F}$ by $\mathfrak{sl}_n(\mathbb{F})$. 

 Over the field of complex numbers $\mathbb{C}$, an {\it orthogonal decomposition} (OD) of a Lie algebra is a decomposition (as a vector space) into a direct sum of its Cartan subalgebras which are pairwise orthogonal with respect to the Killing form. One interesting example of OD was studied by J. G. Thompson in \cite{T76, Th76} where an OD of the Lie algebra of type $E_8$ was discovered while constructing the sporadic simple group $F_3$. The theory of OD of simple Lie algebras over $\mathbb{C}$ was later developed by Kostrikin et al. \cite{KK81, KK83, KK84, KT94}. The problem of OD has attracted further attention due to its applications and its relations to other fields. One important example is the study of mutually unbiased bases (MUBs) in $\mathbb{C}^n$ which has an application in quantum information theory \cite{BS07, DE10, R09}. In particular, a connection between the existence problem of OD of $\mathfrak{sl}_n(\mathbb{C})$ and the construction of maximal collections of MUBs was established in \cite{BS07}. 
 There is a famous conjecture:  $\mathfrak{sl}_n(\mathbb{C})$ has an OD if and only if $n$ is a prime power \cite{KT94}. It is known as the Winnie-the-Pooh conjecture, due to a play on words found in Zahoder's translation of Milne's famous children's book ``Winnie-the-Pooh" into Russian. 
 The ``only if" part of this conjecture remains unsolved and wide open, even for the case of the first non-prime power $n = 6$. Some developments of the OD problem for $\mathfrak{sl}_6(\mathbb{C})$ can be seen in \cite{BZ16, KZ21} and references therein. Indeed, the conjecture has an impact on the problem of MUBs. 
 Recently, the Winnie-the-Pooh problem has been related to an algebraic combinatorics problem (see \cite{TW20}). 
 In addition to the aforementioned references, there are other works related to our topic that are worth mentioning.
First, a review about MUBs and applications of OD in homological algebra and topology can be found in \cite{BYZ21}. The results regarding the description of MUBs in dimension $7$ were presented in \cite{N06, ZK19}. The reader can also find the relationship between the OD problem and symplectic geometry in \cite{BZ19}. Furthermore, the application of the $p$-adic model of quantum mechanics in studying of MUBs and OD has been published in \cite{Z23}.
 
 One can raise a question about the orthogonal decomposition problem for the Lie algebra $\mathfrak{sl}_n$ over a field of positive characteristic or any other commutative ring. For recent works on the case of finite commutative rings, see \cite{S23, SZ20}. This particular problem was also mentioned on page 54 of \cite{KT94}. It was claimed that if $\mathfrak{sl}_n(\mathbb{C})$ has an OD, then the same property is evidently satisfied by $\mathfrak{sl}_n(\mathbb{F})$, where $\mathbb{F}$ is an algebraically closed field with $\cha (\mathbb{F}) \nmid n$, and the question arose: Is the converse true? Of course, $\mathfrak{sl}_n(\mathbb{F})$ and $\mathfrak{sl}_n(\mathbb{C})$ share many of the same properties. However, without assuming the algebraically closed property, the problem turns out to be more complicated. 
 In this paper, we extend the definition of an OD for a modular Lie algebra over an arbitrary field $\mathbb{F}$ of positive characteristic (not necessary algebraically closed) and construct it for $\mathfrak{sl}_n(\mathbb{F})$ under some sufficient conditions. Then we characterize these fields allowing the decomposition to occur when $n = 2$ and $3$. Moreover, we analyze the uniqueness problem for these cases. 
 
 Some properties of a modular Lie algebra do not hold like in a Lie algebra over $\mathbb{C}$, in particular, the Cartan subalgebras' properties. For instance, all Cartan subalgebras of a semisimple Lie algebra over $\mathbb{C}$ are conjugate and abelian \cite{H72}, but this is not true for our setting here. It is worth mentioning that there is a finite dimensional semisimple Lie algebra over a finite field,
which has a non-abelian Cartan subalgebra \cite{D72}.
 Thus, we shall modify the definition of an OD for the modular case as follows.
  
 Let $\mathbb{F}$ be a field of positive characteristic $p > 3$.
 Let $\mathfrak{L}$ be a Lie algebra over $\mathbb{F}$. Recall that a subalgebra $H$ of $\mathfrak{L}$ is a {\it Cartan subalgebra} if it is a nilpotent subalgebra which is a self-normalizer. In this work, we also require all Cartan subalgebra to be ``classical". The details are described in the following paragraph.   
 
 We recall from \cite{S67} that  a Lie algebra $\mathfrak{L}$ over $\mathbb{F}$ is {\it classical} if:
\begin{enumerate}[(i)]
	\item the center of $\mathfrak{L}$ is zero;
	\item $[\mathfrak{L}, \mathfrak{L}] = \mathfrak{L}$;
	\item $\mathfrak{L}$ has an abelian Cartan subalgebra $H$ which satisfies:
	\begin{enumerate}[(a)]
		\item $\mathfrak{L} = \oplus \mathfrak{L}_\alpha$, where $[x, h] = \alpha(h)x$ for all $x \in \mathfrak{L}_\alpha$ and $h \in H$;
		\item if $\alpha \neq 0$ is a root, then $[\mathfrak{L}_\alpha, \mathfrak{L}_{-\alpha}]$ is one-dimensional;
		\item if $\alpha$ and $\beta$ are roots with $\beta \neq 0$, then not all $\alpha + k\beta$ are roots, where $1 \leq k \leq p - 1$.
	\end{enumerate}
Such an $H$ possessing properties (a), (b) and (c) is called a {\it classical} Cartan subalgebra.
\end{enumerate}
Note that all classical Cartan subalgebras of a classical Lie algebra are conjugate, in particular, any two classical Cartan subalgebras of a classical $\mathfrak{sl}_n(\mathbb{F})$ are isomorphic by an element in the projective special linear group $\PSL_n(\mathbb{F})$ \cite{S67}.  We remark that not all Cartan subalgebras need to be classical, even when they are abelian. For example, the Cartan subalgebra $\left<
\begin{pmatrix}
    0 & 1 \\
    -1 & 0
\end{pmatrix}
\right >_{\mathbb{Z}_7}$ is abelian but not classical in $\mathfrak{sl_2(\mathbb{Z}_7)}$ because $\sqrt{-1}$ is invalid in $\mathbb{Z}_7$. Therefore, the adjoint action of the matrix is not semisimple, that is, $\mathfrak{sl_2(\mathbb{Z}_7)}$ does not have a  root space decomposition relative to this subalgebra.

Here, we define the orthogonal decomposition of 
\[
\mathfrak{sl}_n(\mathbb{F}) = \{n \times n \text{ traceless matrices over } \mathbb{F} \}
\]
to be {\it classical} if all of its components are classical, for this case, we denote the decomposition by COD. Recall that the orthogonality is defined via the Killing form: $K(A, B) := Tr (\ad A \cdot \ad B)$ and for the Lie algebra $\mathfrak{sl}_n$
\[
K(A, B) = 2n Tr(AB),
\]
where $Tr$ is the trace of a matrix.
Therefore, a COD of $\mathfrak{sl}_n(\mathbb{F})$ is a decomposition
\[
\mathfrak{sl}_n(\mathbb{F}) = H_0 \oplus H_1 \oplus \cdots \oplus H_n,
\]
where $H_i$'s are pairwise orthogonal classical Cartan subalgebras of $\mathfrak{sl}_n(\mathbb{F})$ with respect to $K$. 
The number $n$ is the Coxeter number of type $A_{n - 1}$. This holds because all components are conjugate. 

\begin{rmk}
    It is possible to find a field $\mathbb{F}$ such that $\mathfrak{sl}_n(\mathbb{F})$ can be orthogonally decomposed into a direct sum of Cartan subalgebras where at least one Cartan subalgebra is non-classical. We give one example here: 
    \[
    \mathfrak{sl_2(\mathbb{Z}_7)} = \left<
\begin{pmatrix}
    1 & 0\\
    0 & -1
\end{pmatrix}
\right >_{\mathbb{Z}_7} \oplus \left<
\begin{pmatrix}
    0 & 1 \\
    -1 & 0
\end{pmatrix}
\right >_{\mathbb{Z}_7} \oplus \left<
\begin{pmatrix}
    0 & 1 \\
    1 & 0
\end{pmatrix}
\right >_{\mathbb{Z}_7}.
    \]
    As discussed above, the second component is not classical. 
\end{rmk}

{\bf The organization of the paper.} In Section \ref{COD}, we derive the main result by constructing a COD of $\mathfrak{sl}_n(\mathbb{F})$ under some sufficient assumptions. We provide two corollaries for the cases when $\mathbb{F}$ is algebraically closed or finite. In Section \ref{characterization}, we characterize positive characteristic fields $\mathbb{F}$ such that $\mathfrak{sl}_n(\mathbb{F}), n = 2, 3$, has a COD. Moreover, we show that a COD of $\mathfrak{sl}_n(\mathbb{F})$ for $n = 2, 3$ where $\mathbb{F}$ is algebraically closed or finite, is unique up to conjugacy. 
 
\section{A construction of COD}\label{COD}

One construction of an OD of $\mathfrak{sl}_n(\mathbb{C})$, when $n = p^m$ is a prime power, is called a $J$-decomposition. It relies on a diagonal matrix consisting of $1$ and primitive $p$th roots of unity and a permutation matrix \cite{KT94}. The proof there was done by using Lie's theorem to verify that the constructed decomposition is an OD. However, the Lie's theorem is invalid for the modular case in general. Furthermore, all the constructed components are immediately classical in the complex case. 

In the following result, we also use the same matrices as mentioned for the construction of a COD, but the verification must be modified for this modular case. In fact, the classical property of all components needs to be shown. 

\begin{thm}\label{main1}
	Let $\mathbb{F}$ be a field of positive characteristic and let $n = p^m$ be a prime power. Assume that $\cha(\mathbb{F}) \neq 2, 3$ and $p$. If 
	\begin{enumerate}
		\item $p = 2$ and $-1$ is a square in $\mathbb{F}$ or
		\item $p > 2$ and $\mathbb{F}$ contains a primitive $p$th root of unity,
	\end{enumerate}
	 then $\mathfrak{sl}_n(\mathbb{F})$ has a COD. 
\end{thm}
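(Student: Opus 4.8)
The plan is to build the $J$-decomposition explicitly and then verify, step by step, that every piece is a \emph{classical} Cartan subalgebra with the orthogonality holding. First I would set up the standard generators: fix a primitive $p$th root of unity $\zeta \in \mathbb{F}$ (which exists by hypothesis, noting in case $p=2$ that $-1$ square is automatic since $\zeta = -1 \in \mathbb{F}$ always, so the real content of (1) is elsewhere — presumably the classical condition). Form the diagonal matrix $X = \diag(1, \zeta, \zeta^2, \dots, \zeta^{p-1})$ on $\mathbb{F}^p$ and the cyclic permutation matrix $Y$, take their $m$-fold tensor analogues $\sX = X^{\otimes \bullet}$, $\sY = Y^{\otimes \bullet}$ acting on $\mathbb{F}^n$, $n = p^m$, and consider the $n^2-1$ matrices $\sX^a \sY^b$ with $(a,b) \in (\mathbb{Z}/p^m)^{2m}$ not all zero. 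These satisfy the Heisenberg-type commutation $\sX \sY = \zeta \, \sY \sX$ (suitably interpreted over the product of cyclic groups), so the spans along each ``line'' $\langle (a,b) \rangle$ through the origin give $n+1$ subspaces $H_0, \dots, H_n$ of dimension $n-1$, and $\mathfrak{sl}_n(\mathbb{F}) = \bigoplus_i H_i$ as a vector space exactly as in the complex case — this part is purely combinatorial and characteristic-free once $\zeta$ has order exactly $p$, which needs $\cha(\mathbb{F}) \neq p$.

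Next I would check the three structural requirements. \emph{Abelian:} two generators $\sX^{a}\sY^{b}$ and $\sX^{a'}\sY^{b'}$ in the same line commute because the symplectic pairing $\langle (a,b),(a',b')\rangle$ vanishes when the vectors are proportional (here $\cha(\mathbb{F}) \neq 2$ may enter to keep the pairing nondegenerate / to avoid $2$-torsion collisions). \emph{Cartan:} each $H_i$ is abelian of dimension $n-1$ and one shows it is self-normalizing by a direct commutator computation against the remaining generators, using that $\sX^a\sY^b$ acts on $\sX^{c}\sY^{d}$ by the scalar $\zeta^{\langle (a,b),(c,d)\rangle}$, and that scalar is nontrivial precisely when $(c,d)$ is not in the line — so the normalizer can contain nothing outside $H_i$. \emph{Classical:} for $H = H_i$ the root space decomposition $\mathfrak{L} = \bigoplus_\alpha \mathfrak{L}_\alpha$ is exactly the eigenspace decomposition for $\ad H$ on the monomial basis, with $\mathfrak{L}_\alpha$ spanned by a single $\sX^c\sY^d$ and $\alpha$ the linear character $\sX^a\sY^b \mapsto \langle (a,b),(c,d)\rangle \bmod p$; property (a) is then immediate, property (b) holds because $[\mathfrak{L}_\alpha, \mathfrak{L}_{-\alpha}]$ is spanned by the single commutator $[\sX^c\sY^d, \sX^{-c}\sY^{-d}] \in H$; and property (c) — no full $\alpha + k\beta$ string for $1 \le k \le p-1$ — follows because the roots take values in $\mathbb{Z}/p$ and the $p$-term string would force a contradiction with $\cha(\mathbb{F}) = p \nmid \cdots$, though one must rule out the degenerate root-string possibilities, and this is where the hypotheses $\cha(\mathbb{F}) \neq 2,3$ are genuinely used (the short strings in low characteristic are the exceptions). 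Finally, \emph{orthogonality}: $K(\sX^a\sY^b, \sX^c\sY^d) = 2n\,\tr(\sX^{a+c}\sY^{b+d})$, and $\tr(\sX^e\sY^f) = 0$ unless $(e,f) \equiv 0$, which forces $(c,d) \equiv -(a,b)$, i.e.\ the two generators lie in the \emph{same} line $H_i$; hence distinct $H_i$'s are $K$-orthogonal.

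The main obstacle I expect is verifying condition (iii)(c), the root-string condition, together with making the ``classical'' verification airtight rather than inherited from the complex picture — in the complex case one just invokes that $\mathfrak{sl}_n$ is classical, but here each $H_i$ must be checked on its own terms, and the root values living in $\mathbb{F}_p$ rather than in $\mathbb{C}$ is exactly what makes short root strings possible and forces the exclusions $p = \cha(\mathbb{F})$ and $\cha(\mathbb{F}) \in \{2,3\}$. A secondary technical point is handling the case $p = 2$ separately: there the character group is $\mathbb{Z}/2$, the symplectic form behaves differently, and the hypothesis that $-1$ be a square is needed to diagonalize the relevant generators (so that $\sX$ is genuinely semisimple with eigenvalues $\pm 1$ in $\mathbb{F}$) — without it one lands in exactly the non-classical situation flagged by the $\mathfrak{sl}_2(\mathbb{Z}_7)$ example in the introduction. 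I would therefore organize the write-up as: (1) construct the matrices and the line decomposition; (2) prove each $H_i$ is an abelian self-normalizing subalgebra; (3) prove each $H_i$ is classical, treating (iii)(c) carefully and invoking $\cha(\mathbb{F}) \neq 2,3,p$; (4) compute the Killing form on monomials to get orthogonality; and (5) assemble, with the $p=2$ case threaded through steps (1) and (3) using the square hypothesis.
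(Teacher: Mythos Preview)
Your overall architecture matches the paper's: build the $J$-decomposition from the Heisenberg-type generators $D^aP^b$ (and their tensor products), check the direct sum and orthogonality via trace, and then argue classicality. The first two steps are fine and essentially identical to the paper. The gap is in your classicality argument.

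You claim that for $H = H_i$ the root space decomposition is ``exactly the eigenspace decomposition for $\ad H$ on the monomial basis, with $\mathfrak{L}_\alpha$ spanned by a single $\sX^c\sY^d$.'' This is false for $i \neq 0$. Compute directly: $[D^aP^b, D^cP^d] = (u^{-bc}-u^{-ad})\,D^{a+c}P^{b+d}$, which is a scalar times a \emph{different} monomial, not a scalar times $D^cP^d$. So the monomials are not $\ad H_i$-eigenvectors; they are only eigenvectors for \emph{conjugation} by the group elements $D^aP^b$, which is not the adjoint action. Consequently your check of condition (b) also collapses: the bracket $[\sX^c\sY^d,\sX^{-c}\sY^{-d}]$ you write down is zero (the two monomials lie on the same line and commute), so it cannot span a one-dimensional $[\mathfrak{L}_\alpha,\mathfrak{L}_{-\alpha}]$. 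The actual $\ad H_i$-eigenvectors are linear combinations of monomials along a coset of the line, and identifying them directly amounts to diagonalizing $H_i$ --- i.e.\ to finding a conjugacy with $H_0$.

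That conjugacy is exactly what the paper supplies and what your plan is missing. For $m=1$ the paper writes down an explicit invertible matrix
\[
X = \bigl(u^{\binom{i-j}{2}}\bigr)_{i,j}
\]
(entries are suitable powers of $u$) and checks $X^{-1}(DP)X = D$, $X^{-1}PX = P$; conjugation by $X$ therefore cyclically permutes $H_0,H_1,\dots,H_{p-1}$ and fixes $H_\infty$, while $P$ itself is diagonalizable with spectrum $\{1,u,\dots,u^{p-1}\}$ so $H_\infty$ is conjugate to $H_0$ as well. Since $H_0$ (diagonal matrices) is manifestly a classical Cartan, all the $H_i$ inherit classicality by conjugacy --- no direct verification of (a)(b)(c) is needed. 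For $p=2$ the separate check that $DP$ has eigenvalues $\pm\sqrt{-1}$ in $\mathbb{F}$ is where the square hypothesis enters, as you correctly anticipated. The $m\ge 2$ case then follows from the $m=1$ case via Kronecker products. Your attribution of $\cha(\mathbb{F})\neq 2,3$ to the root-string condition (c) is also off: in the paper that hypothesis is a standing assumption ensuring Seligman's framework applies (so that $H_0$ is classical and conjugacy transports classicality), not something invoked string-by-string inside the proof.
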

\begin{proof}
We begin with the case $m = 1$. For $p = 2$, if $-1$ is a square in $\mathbb{F}$, then it is clear that $ \mathfrak{sl}_2(\mathbb{F})$ has a COD
\[
\mathfrak{sl}_2(\mathbb{F}) =\bigg \langle 
\begin{pmatrix}
1 & 0 \\
0 & -1
\end{pmatrix}
\bigg \rangle_{\mathbb{F}}
\oplus
\bigg \langle 
\begin{pmatrix}
0 & 1 \\
-1 & 0
\end{pmatrix}
\bigg \rangle_{\mathbb{F}}
\oplus
\bigg \langle 
\begin{pmatrix}
0 & 1 \\
1 & 0
\end{pmatrix}
\bigg \rangle_{\mathbb{F}}.
\]
Indeed, the diagonal one is a classical Cartan subalgebra and clearly conjugate to the last component. The second one is also a classical Cartan subalgebra due to the availability of $\sqrt{-1}$. Now, assume that $p > 2$ and  $\mathbb{F}$ contains a primitive $p$th root of unity $u$.
Let \[
D = \diag(1, u, \ldots, u^{p-1}) \text{ \ and \ } 
P =
\begin{pmatrix}
0 & 0 & \cdots & 0 & 1 \\
1 & 0 & \cdots & 0 & 0 \\
0 & 1 & \cdots & 0 & 0 \\
\vdots & \vdots & \ddots & \vdots & \vdots \\
0 & 0 & \cdots & 1 & 0
\end{pmatrix}.
\] 
Thus, $D$ and $P$ are matrices in $\mathfrak{sl}_p(\mathbb{F})$ and $p$ is the smallest positive integer such that $D^p = P^p = I_p$. For any $a, b \in \mathbb{Z}_p$, let $J_{(a, b)} = D^a P^b$. We have
\begin{align}\label{trzero}
 \tr (J_{(a, b)}) = 0 \Longleftrightarrow (a, b) \neq (0, 0)
 \end{align}
 and
\begin{align}\label{commuPD}
P^bD^a = u^{-ab}D^aP^b.
\end{align}
The last equation implies 
\begin{align}\label{brac}
J_{(a, b)}J_{(c, d)} &= u^{-bc}J_{(a + c, b + d)} \text{ \ and \ } \\ \label{brack}
[J_{(a, b)}, J_{(c, d)}] &= (u^{-bc} - u^{- ad})J_{(a + c, b + d)}
\end{align}
for $a, b, c, d \in \mathbb{Z}_p$. For $a, k \in \mathbb{Z}_p$ with $a \neq 0$, $J_{(a, ka)}$ and $J_{(0, a)}$ are elements of $ \mathfrak{sl}_p(\mathbb{F})$ by (\ref{trzero}). For a fixed $k \in \mathbb{Z}_p$, it follows immediately from the definitions of $D$ and $P$ that $J_{(1,k)}, J_{(2,2k)}, \ldots, J_{(p - 1,k(p - 1))}$ are linearly independent. Construct the following subalgebras:
\begin{align*}
H_k &= \langle J_{(a, ka)} \mid a \in \mathbb{Z}_p^\times \rangle_\mathbb{F}, k \in \mathbb{Z}_p \text{ \ and } \\
 H_\infty &= \langle J_{(0, a)} \mid a \in \mathbb{Z}_p^\times \rangle_\mathbb{F} = \langle P, P^2, \ldots, P^{p - 1} \rangle_\mathbb{F}.
 \end{align*}

Let 
\[
X = 
\begin{bmatrix}
1 & u^{\frac{p(p - 1)}{2}} & u^{\frac{(p - 1)(p - 2)}{2}}& \cdots & u^3 & u \\
u & 1 &  u^{\frac{p(p - 1)}{2}} & \cdots & u^6 & u^3 \\
u^3 & u & 1 & \cdots & u^{10} & u^6 \\
\vdots & \vdots  & \vdots &   \ddots & \vdots & \vdots \\
u^{\frac{(p - 1)(p - 2)}{2}} & u^{\frac{(p - 2)(p - 3)}{2}} & u^{\frac{(p - 3)(p - 4)}{2}} & \cdots & 1 & u^{\frac{p(p - 1)}{2}}\\
u^{\frac{p(p - 1)}{2}} &  u^{\frac{(p - 1)(p - 2)}{2}} & u^{\frac{(p - 2)(p - 3)}{2}} & \cdots & u & 1
\end{bmatrix}.
\] 
Since $p > 2$, $X$ is invertible. It is straightforward to verify that $X^{-1}DPX = D$ and $X^{-1} P X = P$. Thus by (\ref{commuPD}), conjugation by the matrix $X$ shifts $H_0, H_1, \ldots, H_{p-1}$ cyclically and fixes $H_\infty$. Moreover, $P$ is diagonalizable with eigenvalues listed in $D$. Thus, $H_0$ is conjugate to $H_\infty$. Note that $H_0$ is clearly a classical Cartan subalgebra. These imply that all $H_k, k \in \mathbb{Z}_p$ and $H_\infty$ are classical Cartan subalgebras of $\mathfrak{sl}_p(\mathbb{F})$. Next, we show that 
\begin{align}\label{direct}
\mathfrak{sl}_p(\mathbb{F}) = H_\infty \oplus H_0 \oplus H_1 \oplus \ldots \oplus H_{p - 1},
\end{align}
as a vector space. It is clear from the construction that $H_0 \cap \sum_{j \neq 0} H_j = \{ 0 \}$. In particular, the sum is direct for $H_0$ and $H_\infty$. Thus, the sums for all $H_i$'s are also direct, and so by counting dimensions, we have the equality.

We prove that the decomposition (\ref{direct}) is pairwise orthogonal with respect to the Killing form $K(A, B) = 2p \tr(AB)$.  It is obvious that $H_\infty$ is orthogonal to all the others $H_i$'s. Let $a, b \in \mathbb{Z}_p^\times$ and $k_1, k_2 \in \mathbb{Z}_p$ with $k_1 \neq k_2$. Then 
$(a + b, k_1a + k_2b) \neq (0, 0)$
and so by (\ref{brac}), 
\begin{align*}
K(J_{(a, k_1a)}, J_{(b, k_2b)}) &= 2p \tr (J_{(a, k_1a)} J_{(b, k_2b)}) \\
& = 2p u^{-k_1ab} \tr(J_{(a + b, k_1a + k_2 b)}) \\
& = 0.
\end{align*}
Thus, $H_i$ and $H_j$ are orthogonal for all $i, j \in \mathbb{Z}_p$ and $i \neq j$. 
This completes the proof for the case $m = 1$.

Next suppose that $m \geq 2$. Let $\mathbb{F}_{p^m}$ be the finite field of $p^m$ elements and $W = \mathbb{F}_{p^m} \oplus \mathbb{F}_{p^m}$ a $2m$-dimensional vector space over $\mathbb{Z}_{p}$ equipped with a symplectic form $ \langle \cdot , \cdot \rangle : W \times W \rightarrow \mathbb{Z}_p$ defined by the field trace \footnote[1]{The field trace of $\alpha \in \mathbb{F}_{p^m}$ is defined to be the sum of all Galois conjugates of $\alpha$, i.e. 
	\[
	Tr _{\mathbb{F}_{p^m}/\mathbb{Z}_p}(\alpha) = \alpha + \alpha^p + \cdots+ \alpha^{p^{m - 1}}.
	\]
} 
	 as follows: for any elements $\vec{w} = (\alpha; \beta), \vec{w}' = (\alpha'; \beta') \in W$,
\[
\langle \vec{w}, \vec{w}'  \rangle := Tr _{\mathbb{F}_{p^m}/\mathbb{Z}_{p}}(\alpha \beta' - \alpha' \beta).
\]
Then, by Corollary 3.3 of \cite{ZW02}, $W$ possesses a symplectic basis 
$$
\mathcal{B} = \{ \vec{e}_1, \ldots, \vec{e}_m, \vec{f}_1,\ldots, \vec{f}_m \}
$$ 
where $\{\vec{e}_1, \ldots, \vec{e}_m \}$ and $\{\vec{f}_1,\ldots, \vec{f}_m \}$ span the first and the second factors, respectively, such that

\[
\langle \vec{w}, \vec{w}' \rangle = \sum_{i = 1}^{m}(a_ib'_i - a'_ib_i),
\]
where $\vec{w} = \sum_{i = 1}^{m}(a_i\vec{e}_i + b_i\vec{f}_i)$ and $\vec{w}' = \sum_{i = 1}^{m}(a'_i\vec{e}_i + b'_i\vec{f}_i)$. With the basis $\mathcal{B}$, write each vector $\vec{w} \in W$ as
\[
\vec{w} = (a_1, \ldots, a_m; b_1, \ldots, b_m),
\]
and associate it with a matrix
\[
\mathcal{J}_{\vec{w}} = J_{(a_1, b_1)} \otimes J_{(a_2, b_2)} \otimes \cdots \otimes J_{(a_m, b_m)},
\]
where $\otimes$ denotes the Kronecker product of matrices \footnote[2]{The Kronecker product of an $m \times n$ matrix $A = (a_{ij})$ and a $p \times q$ matrix $B$ is defined to be the $mp \times nq$ block matrix:
$
 A \otimes B = 
 \begin{bmatrix}
 a_{11}B & \cdots & a_{1n}B \\
  \vdots & \ddots &  \vdots \\
 a_{m1}B & \cdots & a_{mn}B
 \end{bmatrix}.
$}, and $J_{(a_i, b_i)}$ is given as in the case $m = 1$ with a given primitive $p$th root of unity $u \in \mathbb{F}^\times$ for all $i = 1, 2, \ldots, m$. Then the set $\{ \mathcal{J}_{\vec{w}} \mid 0 \neq \vec{w} \in W \}$ forms a basis of $\mathfrak{sl}_{p^m}(\mathbb{F})$. By properties of the Kronecker product, we have the following identities:
\begin{align}\label{brac1}
\mathcal{J}_{\vec{w}}\mathcal{J}_{\vec{w}'} &= u^{-\mathfrak{B}(\vec{w}, \vec{w}')}\mathcal{J}_{\vec{w} + \vec{w}'} \text{ \ and \ } \\ \label{brac1.5}
 [\mathcal{J}_{\vec{w}}, \mathcal{J}_{\vec{w}'}] &= (u^{-\mathfrak{B}(\vec{w}, \vec{w}')} -u^{-\mathfrak{B}(\vec{w}', \vec{w})})\mathcal{J}_{\vec{w} + \vec{w}'} \\
 &= u^{-\mathfrak{B}(\vec{w}', \vec{w})}(u^{\langle \vec{w}, \vec{w}' \rangle} - 1) \mathcal{J}_{\vec{w} + \vec{w}'}, \nonumber
\end{align} 
where 
\[
\mathfrak{B}(\vec{w}, \vec{w}') = \sum_{i = 1}^m a'_i b_i
\]
for all $\vec{w} = (a_1, \ldots, a_m; b_1, \ldots, b_m), \vec{w}' = (a'_1, \ldots, a'_m; b'_1, \ldots, b'_m) \in W$. 

Write $\vec{w} = (\alpha; \beta) \in W$, where $\alpha = (a_1, a_2, \ldots, a_m)$ and $\beta = (b_1, b_2, \ldots, b_m)$. Define 
\begin{align*}
 H_\infty =  \langle \mathcal{J}_{(0; \lambda )} \mid \lambda \in \mathbb{F}^\times_q \rangle_{\mathbb{F}} \text{ \ and \ } 
H_\alpha = \langle \mathcal{J}_{( \lambda; \alpha \lambda )} \mid \lambda \in \mathbb{F}^\times_q \rangle_{\mathbb{F}}, 
\end{align*}
 where $\alpha \in \mathbb{F}_{p^m}$. Since all $\mathcal{J}_{\vec{w}}$'s are basis elements, we have  
 \begin{align}\label{direct2}
 \mathfrak{sl}_q({\mathbb{F}}) = H_\infty \oplus (\oplus_{\alpha \in \mathbb{F}_{p^m}} H_\alpha).
 \end{align}
 
 Using the similar argument to the case where $m = 1$ and the fact that an eigenvalue of the Kronecker product is the product of each eigenvalue from each component with the corresponding eigenvector formed by the tensor product of each, we see that all $H_i's$ are classical Cartan subalgebras.  
 
 To see that they are pairwise orthogonal, note that if $(\gamma; \delta) \neq (-\alpha; -\beta)$, then $\tr (\mathcal{J}_{(\alpha; \beta)}\mathcal{J}_{(\gamma; \delta)}) = 0$. Indeed, if $\lambda = (a_1, \ldots, a_m), \beta = (b_1, \ldots, b_m), \gamma = (a'_1, \ldots, a'_m), \\ \delta = (b'_1, \ldots, b'_m)$ and $a_i \neq -a'_i$ for some $i \in \{1, \ldots, m\}$, then $a_i + a'_i \neq 0$ and $\tr (J_{(a_i + a'_i, b_i + b'_i)}) = 0$ (as in the case $m = 1$). By (\ref{brac1}) and the trace property of the Kronecker product,
 \begin{align*} 
 \tr (\mathcal{J}_{(\alpha; \beta)}\mathcal{J}_{(\gamma; \delta)}) &= u^{-\mathfrak{B}((\alpha;\beta), (\gamma; \delta))} \tr (\mathcal{J}_{(a_1+a_1', \ldots, a_m + a_m' ; b_1 + b_1', \ldots, b_m + b_m')}) \\
  &= u^{-\mathfrak{B}((\alpha;\beta), (\gamma; \delta))} \tr(\otimes_{j = 1}^m J_{(a_j + a_j', b_j + b_j')}) \\
  &= u^{-\mathfrak{B}((\alpha;\beta), (\gamma; \delta))} \prod_{j=1}^{m}\tr( J_{(a_j + a_j', b_j + b_j')}) \\
  & = 0.
   \end{align*}
  This completes the proof.
\end{proof}

The following corollary is immediate.

\begin{cor} \label{coralgeclosed}
	Let $\mathbb{F}$ be an algebraically closed field of positive characteristic and let $n = p^m$ be a prime power. If $\cha(\mathbb{F}) \neq 2, 3$ and $p$, then $\mathfrak{sl}_n(\mathbb{F})$ has a COD.
\end{cor}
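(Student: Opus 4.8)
The plan is to deduce Corollary \ref{coralgeclosed} directly from Theorem \ref{main1}, since an algebraically closed field automatically satisfies the arithmetic hypotheses appearing in the two cases of the theorem. Concretely, let $\mathbb{F}$ be algebraically closed with $\cha(\mathbb{F}) = \ell \neq 2, 3, p$, and let $n = p^m$.

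First I would split into the two cases of Theorem \ref{main1} according to whether $p = 2$ or $p > 2$. If $p = 2$, I need $-1$ to be a square in $\mathbb{F}$; but $\mathbb{F}$ is algebraically closed, so the polynomial $x^2 + 1$ has a root, i.e. $-1$ is a square (note $\ell \neq 2$ guarantees $-1 \neq 1$, though this is not even needed). If $p > 2$, I need $\mathbb{F}$ to contain a primitive $p$th root of unity; since $\ell = \cha(\mathbb{F}) \neq p$, the polynomial $x^p - 1$ is separable over $\mathbb{F}$ (its derivative $px^{p-1}$ is nonzero and coprime to it), so it splits into $p$ distinct linear factors in the algebraically closed field $\mathbb{F}$, and hence $\mathbb{F}$ contains all $p$th roots of unity, in particular a primitive one.

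In either case the hypotheses of Theorem \ref{main1} are met, so $\mathfrak{sl}_n(\mathbb{F})$ has a COD, which is exactly the claim. I would write this out in a few lines.

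There is essentially no obstacle here: the only thing to be careful about is the separability argument ensuring a genuinely \emph{primitive} $p$th root of unity exists (as opposed to the trivial root $1$), which is precisely where the hypothesis $\cha(\mathbb{F}) \neq p$ is used, and the observation that algebraic closedness makes $-1$ a square in characteristic $\neq 2$. Both are standard facts about roots of unity in fields, so the proof is genuinely immediate as the text already asserts.
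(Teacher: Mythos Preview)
Your proposal is correct and matches the paper's approach exactly: the paper simply states that the corollary is immediate from Theorem~\ref{main1}, and your argument spells out precisely why the hypotheses of that theorem are automatically satisfied over an algebraically closed field of characteristic not equal to $p$.
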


Let $\mathbb{F}_q$ denote the finite field with $q$ elements. It is known that $-1$ is a square in $\mathbb{F}_q$ if and only if $q$ is even or $q \equiv 1 \pmod{4}$ and for a prime $p$, $\mathbb{F}_q$ has a primitive $p$th root of unity if and only if $p \mid (q - 1)$, by Cauchy theorem. Thus, we have:

\begin{cor}
\label{corfinitefield}
	Let $n = p^m$ be a prime power. Assume that $\cha(\mathbb{F}_q) \neq 2, 3$ and $p$. If
	\begin{enumerate}
	    \item  $p = 2$ and $q \equiv 1 \pmod{4}$, or
	    \item $p > 2$ and  $p \mid (q - 1)$,
	\end{enumerate}
	then 
	 $\mathfrak{sl}_n(\mathbb{F}_q)$ has a COD.
\end{cor}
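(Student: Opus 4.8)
The final statement to be proved is Corollary~\ref{corfinitefield}, which is a straightforward application of Theorem~\ref{main1} to the finite field $\mathbb{F}_q$, using standard facts from the arithmetic of finite fields.

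\medskip

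The plan is to verify that, under the stated hypotheses on $q$, the field $\mathbb{F}_q$ satisfies the corresponding hypotheses of Theorem~\ref{main1}, and then invoke that theorem directly. First I would observe that $\mathbb{F}_q$ is a field of positive characteristic with $\cha(\mathbb{F}_q) \neq 2, 3$ and $\cha(\mathbb{F}_q) \neq p$ by assumption, so the ambient hypotheses of Theorem~\ref{main1} are already in place. It then remains only to check conditions (1) and (2) of that theorem.

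\medskip

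For case (1), suppose $p = 2$ and $q \equiv 1 \pmod 4$. I would recall the elementary fact that in a finite field $\mathbb{F}_q$ of odd characteristic, the multiplicative group $\mathbb{F}_q^\times$ is cyclic of order $q - 1$, so $-1$ (the unique element of order $2$) is a square precisely when $4 \mid (q-1)$, i.e. $q \equiv 1 \pmod 4$. Hence $-1$ is a square in $\mathbb{F}_q$, which is exactly condition (1) of Theorem~\ref{main1}. For case (2), suppose $p > 2$ and $p \mid (q-1)$. Again using that $\mathbb{F}_q^\times$ is cyclic of order $q-1$, the condition $p \mid (q-1)$ guarantees (by Cauchy's theorem, or simply by picking a generator and raising it to the power $(q-1)/p$) that $\mathbb{F}_q^\times$ contains an element of order $p$, i.e. a primitive $p$th root of unity. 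This is precisely condition (2) of Theorem~\ref{main1}. In either case Theorem~\ref{main1} applies and yields a COD of $\mathfrak{sl}_n(\mathbb{F}_q)$.

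\medskip

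Since the arithmetic facts invoked here are completely standard and in fact already stated in the sentence immediately preceding the corollary, there is no real obstacle: the proof is a one-line reduction to Theorem~\ref{main1}. The only thing to be careful about is to state explicitly which of the two cases of the theorem each hypothesis triggers, and to note that the characteristic restrictions are inherited verbatim from the statement of the corollary.

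\begin{proof}
Since $\cha(\mathbb{F}_q) \neq 2, 3$ and $\cha(\mathbb{F}_q) \neq p$, it suffices by Theorem~\ref{main1} to verify its conditions (1) and (2). In case (1), $p = 2$ and $q \equiv 1 \pmod 4$; as $\mathbb{F}_q^\times$ is cyclic of order $q - 1$, its unique element of order $2$, namely $-1$, is a square exactly when $4 \mid (q-1)$, which holds. In case (2), $p > 2$ and $p \mid (q - 1)$; again $\mathbb{F}_q^\times$ being cyclic of order $q-1$, it contains an element of order $p$, that is, a primitive $p$th root of unity. In either case Theorem~\ref{main1} gives that $\mathfrak{sl}_n(\mathbb{F}_q)$ has a COD.
\end{proof}
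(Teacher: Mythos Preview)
Your proof is correct and follows exactly the same approach as the paper: the corollary is deduced immediately from Theorem~\ref{main1} using the standard facts (stated just before the corollary) that $-1$ is a square in $\mathbb{F}_q$ iff $q \equiv 1 \pmod 4$ and that $\mathbb{F}_q$ contains a primitive $p$th root of unity iff $p \mid (q-1)$.
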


\section{Characterization of fields for COD of $\mathfrak{sl}_n, n = 2, 3$} \label{characterization}

Let $\mathbb{F}$ be a field of positive characteristic $>3$.
Assume that $\mathfrak{sl}_n(\mathbb{F})$ is classical. Then all its classical Cartan subalgebras are conjugate \cite{S67}. We say that any two CODs of $\mathfrak{sl}_n(\mathbb{F})$ are {\it conjugate} if there exists an isomorphism in $\Aut (\mathfrak{sl}_n(\mathbb{F}))$ mapping each component from the first decomposition to exactly one component in the second. 
Let $H_0$ be the classical Cartan subalgebra of $\mathfrak{sl}_n{(\mathbb{F})}$ consisting of the diagonal matrices. Note that the conjugation preserves the orthogonality with respect to the Killing form $K$. We may assume that the first component of a COD of $\mathfrak{sl}_n(\mathbb{F})$ is $H_0$. Let $H$ be another classical Cartan subalgebra of $\mathfrak{sl}_n{(\mathbb{F})}$ orthogonal to $H_0$ with respect to $K$. According to the corollary to Lemma II.1.2 of \cite{S67}, $K$ is nondegenerate, and so is its restriction to $H_0$.
Since $H$ and $H_0$ are conjugate, $K_{|_H}$ is also non-degenerate. From these properties, we have the following lemma, but we omit the proof since the similar arguments in \cite [Page 32]{KT94} and \cite[Lemma 2.1]{SZ20} can be applied. 

\begin{lemma}\label{H}
	Every classical Cartan subalgebra orthogonal to $H_0$ has a basis of the form indicated below.
	\begin{enumerate}[(1)]
		\item If $n = 2$, then 
		\[
		H = \bigg \langle 
		\begin{pmatrix}
		0 & 1 \\
		a & 0
		\end{pmatrix}
		\bigg \rangle_{\mathbb{F}} \]
		for some $a \in \mathbb{F} \setminus \{0\}$.
		\item If $n = 3$, then 
		\[
		H = \Bigg \langle 
		\begin{pmatrix}
		0 & 1 & 0\\
		0 & 0 & a \\
		ab & 0 & 0
		\end{pmatrix},
		\begin{pmatrix}
		0 & 0 & 1\\
		ab & 0 & 0 \\
		0 & b & 0
		\end{pmatrix}
		\Bigg \rangle_{\mathbb{F}} \]
		for some $a, b \in \mathbb{F} \setminus \{0\}$.
	\end{enumerate}	
\end{lemma}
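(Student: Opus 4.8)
The plan is to characterize classical Cartan subalgebras $H$ orthogonal to $H_0$ by starting from the general form of a Cartan subalgebra orthogonal to $H_0$ and then imposing the classical condition. First I would recall, as in \cite[Page 32]{KT94} and \cite[Lemma 2.1]{SZ20}, that since $K_{|_{H_0}}$ is nondegenerate, orthogonality to $H_0$ forces every matrix in $H$ to have zero diagonal; indeed $K(A,D)=2n\tr(AD)$ for all diagonal traceless $D$ means the diagonal of $A$ is orthogonal (in $\mathbb F^n$, modulo the all-ones vector) to all traceless diagonal vectors, hence is a scalar multiple of the identity, and tracelessness of $A$ makes that scalar zero. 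So $H$ consists of matrices with zero diagonal, and $\dim H = n-1$.

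Next, for $n=2$ this is immediate: a traceless $2\times 2$ matrix with zero diagonal is $\begin{pmatrix}0&c\\d&0\end{pmatrix}$, and scaling the basis vector we may take $c=1$, giving $\begin{pmatrix}0&1\\a&0\end{pmatrix}$; we must have $a\neq 0$, since $a=0$ would make the element nilpotent so that the subalgebra is not self-normalizing (equivalently, not a Cartan subalgebra — a nonzero nilpotent element generates a one-dimensional subalgebra whose normalizer is strictly larger). For $n=3$, the argument is more delicate. The subalgebra $H$ is a two-dimensional abelian subalgebra of strictly-zero-diagonal traceless $3\times 3$ matrices. I would parametrize $H$ by a basis and use (i) commutativity and (ii) the classical/self-normalizing condition to pin down the shape. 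A clean route: because $H$ is a classical Cartan subalgebra, it is conjugate to $H_0$ by an element of $\mathrm{PSL}_3(\mathbb F)$, hence every nonzero element of $H$ is semisimple with three distinct eigenvalues summing to zero; in particular no element of $H$ is nilpotent, and each element has characteristic polynomial $t^3 - (\tr X)\,t + \text{(something)}$ — but $\tr X=0$ and, since the diagonal is zero, the coefficient of $t$ (the sum of $2\times 2$ principal minors) must be nonzero for three distinct eigenvalues. Using the parametrization of a generic zero-diagonal traceless matrix, the commuting condition between two basis elements and the requirement that a generic combination be regular semisimple cuts the parameter space down; after a change of basis within $H$ and a diagonal conjugation (which preserves $H_0$ and the orthogonality), one normalizes the free entries to arrive at the displayed two matrices depending on $a,b\in\mathbb F\setminus\{0\}$.

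The step I expect to be the main obstacle is the $n=3$ normalization: showing that \emph{every} classical Cartan subalgebra orthogonal to $H_0$ can, after conjugation by a diagonal matrix (to preserve $H_0$), be brought into exactly the two-parameter family displayed, rather than some a priori larger family of zero-diagonal abelian subalgebras. This requires carefully exploiting the interplay between the commutativity equations for a $3\times 3$ zero-diagonal matrix pair and the nondegeneracy/regularity forced by the classical condition, and checking that the remaining scaling freedom is precisely the diagonal torus acting on the off-diagonal entries, leaving the two invariants $a$ (from the ``upper'' cycle) and $b$ (from the ``lower'' cycle) — essentially $a \sim$ product around one $3$-cycle of entries and $ab \sim$ product around the other. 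Since the paper explicitly says the proof mirrors \cite[Page 32]{KT94} and \cite[Lemma 2.1]{SZ20}, the intended proof is to invoke those computations verbatim with the observation that the classical hypothesis supplies exactly the nondegeneracy used there over $\mathbb C$; so in the write-up I would state the diagonal-reduction lemma, perform the $2\times 2$ case explicitly, and for $3\times 3$ carry out the commuting-matrices computation only to the point where the pattern of the two displayed generators emerges, citing the references for the routine remainder.
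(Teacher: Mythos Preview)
Your outline is essentially the intended one: the paper omits the proof entirely and points to \cite[p.~32]{KT94} and \cite[Lemma~2.1]{SZ20}, whose argument is exactly the ``zero diagonal from orthogonality, then commutativity plus regularity'' computation you describe. The $n=2$ case is handled correctly.

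One point to tighten for $n=3$: you invoke a diagonal conjugation to normalize, but the lemma asserts that $H$ \emph{itself} has a basis of the displayed form, not merely some diagonal conjugate of $H$. Diagonal conjugation is in fact unnecessary. Once you know every element of $H$ has zero diagonal, write any $X\in H$ as $X=X_{+}+X_{-}$ with $X_{+}$ supported on the $3$-cycle $(1,2),(2,3),(3,1)$ and $X_{-}$ on the complementary cycle $(1,3),(2,1),(3,2)$. A direct computation shows that for zero-diagonal matrices $[X_{+},Y_{+}]$ lands in the sub-cycle and $[X_{-},Y_{-}]$ in the super-cycle; hence $[A,B]=0$ for a basis $\{A,B\}$ of $H$ forces $A_{+}\parallel B_{+}$ and $A_{-}\parallel B_{-}$. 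A change of basis \emph{within} $H$ then produces $M_{1}$ purely on the super-cycle and $M_{2}$ purely on the sub-cycle. The classical hypothesis (each nonzero element regular semisimple) forces all three entries of $M_{1}$ and of $M_{2}$ to be nonzero (otherwise the characteristic polynomial is $t^{3}$). Scale $M_{1}$ so its $(1,2)$ entry is $1$ and $M_{2}$ so its $(1,3)$ entry is $1$; the single remaining commutativity relation $\alpha q=\beta r=\gamma p$ then pins the entries of $M_{2}$ to $ab$ and $b$ exactly as displayed. So drop the conjugation step and the argument proves the lemma as stated.
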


Next, we characterize fields $\mathbb{F}$ allowing $\mathfrak{sl}_n, n =2, 3$ to admit a COD. Note that only the necessary condition of the following theorem needs to be proved. 

\begin{thm} \label{main2}
Let $\mathbb{F}$ be a field of characteristic $> 3$. Then $\mathfrak{sl}_2(\mathbb{F})$ has a unique (up to conjugacy) COD if and only if $\mathbb{F}$ contains a primitive fourth root of unity. 
\end{thm}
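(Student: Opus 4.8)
The plan is to verify that only the necessary direction requires proof, since the construction of Theorem \ref{main1} (case $p = 2$, $n = 2$) already exhibits a COD when $-1$ is a square, which over any field is equivalent to the existence of a primitive fourth root of unity; and the uniqueness half of the ``if'' direction will drop out of the same analysis that establishes necessity. So assume $\mathfrak{sl}_2(\mathbb{F})$ has a COD. By the conjugacy of classical Cartan subalgebras \cite{S67}, we may assume the first component is the diagonal Cartan subalgebra $H_0$, and the remaining two components $H_1, H_2$ are classical Cartan subalgebras orthogonal to $H_0$. By Lemma \ref{H}(1), each $H_i$ ($i = 1, 2$) is spanned by a matrix of the form $\left(\begin{smallmatrix} 0 & 1 \\ a_i & 0 \end{smallmatrix}\right)$ for some $a_i \in \mathbb{F}^\times$.

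First I would impose the remaining constraint, namely that $H_1$ and $H_2$ are orthogonal to each other with respect to $K(A,B) = 4\tr(AB)$. A direct computation gives $\tr\!\left(\left(\begin{smallmatrix} 0 & 1 \\ a_1 & 0 \end{smallmatrix}\right)\left(\begin{smallmatrix} 0 & 1 \\ a_2 & 0 \end{smallmatrix}\right)\right) = a_1 + a_2$, so orthogonality forces $a_2 = -a_1$; write $a := a_1$, so $a_2 = -a$. Next I would use that each $H_i$ must be \emph{classical}, in particular that the adjoint action of its generator is semisimple with a root-space decomposition satisfying the conditions in the definition. For $M = \left(\begin{smallmatrix} 0 & 1 \\ a & 0 \end{smallmatrix}\right)$, the eigenvalues of $\ad M$ on $\mathfrak{sl}_2(\mathbb{F})$ are $0, \pm 2\sqrt{a}$, so $M$ (equivalently $\ad M$) is diagonalizable over $\mathbb{F}$ if and only if $a$ is a square in $\mathbb{F}$; say $a = c^2$ with $c \in \mathbb{F}^\times$. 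Applying the same reasoning to $H_2$ with generator $\left(\begin{smallmatrix} 0 & 1 \\ -a & 0 \end{smallmatrix}\right)$ forces $-a$ to be a square as well. Since $a = c^2$ is already a square, this yields that $-1 = (-a)/a$ is a square in $\mathbb{F}$, i.e. $\mathbb{F}$ contains a primitive fourth root of unity (here characteristic $> 3 \neq 2$ is used so that $-1 \ne 1$ and the element is genuinely of order $4$). This proves necessity.

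For uniqueness up to conjugacy: once we know $\mathbb{F}$ contains $i$ with $i^2 = -1$, the above shows that after conjugating the first component to $H_0$, any COD has its other two components of the form $\langle\left(\begin{smallmatrix} 0 & 1 \\ a & 0 \end{smallmatrix}\right)\rangle$ and $\langle\left(\begin{smallmatrix} 0 & 1 \\ -a & 0 \end{smallmatrix}\right)\rangle$ for some square $a = c^2 \in \mathbb{F}^\times$. I would then conjugate by $\diag(c, 1) \in \GL_2(\mathbb{F})$ (which normalizes $H_0$, acting on it as the identity up to the action on $\mathfrak{sl}_2$) to reduce to the case $a = 1$, giving precisely the COD displayed in the proof of Theorem \ref{main1}. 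Hence any two CODs are conjugate.

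The main obstacle I anticipate is bookkeeping around the classicality condition rather than any deep difficulty: one must be careful that ``classical Cartan subalgebra'' genuinely forces the generator's adjoint action to be split-semisimple over $\mathbb{F}$ (not merely semisimple after base change), since that is exactly the point where the field-theoretic hypothesis $\sqrt{-1} \in \mathbb{F}$ enters — this is the same phenomenon illustrated by the $\mathfrak{sl}_2(\mathbb{Z}_7)$ example in the introduction, where $\left(\begin{smallmatrix} 0 & 1 \\ -1 & 0 \end{smallmatrix}\right)$ spans an abelian but non-classical Cartan subalgebra. A secondary point to handle cleanly is confirming that conjugation by an element of $\GL_2(\mathbb{F})$ (as opposed to $\SL_2$ or $\PSL_2$) still counts as an element of $\Aut(\mathfrak{sl}_2(\mathbb{F}))$ for the purposes of the conjugacy statement, which it does since inner automorphisms of $\mathfrak{sl}_n$ are induced by $\PGL_n$.
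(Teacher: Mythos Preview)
Your proposal is correct and follows essentially the same route as the paper's proof: reduce the first component to $H_0$, apply Lemma~\ref{H}(1) to obtain the off-diagonal form of the other two components, use orthogonality to force $a_2=-a_1$, use classicality (split-semisimplicity of the generator) to force both $a$ and $-a$ to be squares so that $\sqrt{-1}\in\mathbb{F}$, and then conjugate by $\diag(\sqrt{a},1)$ for uniqueness. Your added remarks about the eigenvalues of $\ad M$ and about $\PGL_2$-conjugation counting as an automorphism are fine elaborations but do not diverge from the paper's argument.
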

\begin{proof}
Assume that $\mathfrak{sl}_2(\mathbb{F})$ admits a COD. Then by Lemma \ref{H}, the COD must be of the form 
\[
\mathfrak{sl}_2(\mathbb{F}) =\bigg \langle 
\begin{pmatrix}
1 & 0 \\
0 & -1
\end{pmatrix}
\bigg \rangle_{\mathbb{F}}
\oplus
\bigg \langle 
\begin{pmatrix}
0 & 1 \\
a & 0
\end{pmatrix}
\bigg \rangle_{\mathbb{F}}
\oplus
\bigg \langle 
\begin{pmatrix}
0 & 1 \\
b & 0
\end{pmatrix}
\bigg \rangle_{\mathbb{F}}
\]
for some $a, b \neq 0$. The orthogonality implies that $b = -a$. Since all components are conjugate, the middle one must be diagonalizable and $a$ is forced to be a square in $\mathbb{F}$. The last component is classical if and only if $-a$ is a square, which implies $\sqrt{-1} \in \mathbb{F}$.

The uniqueness follows easily from the fact that $\bigg \langle 
\begin{pmatrix}
0 & 1 \\
a & 0
\end{pmatrix}
\bigg \rangle_{\mathbb{F}}$ is conjugate to $\bigg \langle 
\begin{pmatrix}
0 & 1 \\
1 & 0
\end{pmatrix}
\bigg \rangle_{\mathbb{F}}$ by the matrix
$
\begin{pmatrix}
\sqrt{a} & 0 \\
0 & 1
\end{pmatrix}
$.
\end{proof}

For the finite field $\mathbb{F}_q$, we have the following characterization. 

\begin{cor}\label{corfinite2}
 Assume that $\cha (\mathbb{F}_q) > 3$. Then $\mathfrak{sl}_2(\mathbb{F}_q)$ has a unique (up to conjugacy) COD if and only if $q \equiv 1 \pmod{4}$.    
\end{cor}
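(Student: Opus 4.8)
The plan is to deduce Corollary \ref{corfinite2} from Theorem \ref{main2} by translating the field-theoretic condition ``$\mathbb{F}_q$ contains a primitive fourth root of unity'' into a congruence condition on $q$. First I would note that since $\cha(\mathbb{F}_q) > 3$, in particular the characteristic is odd and different from $2$, so $q$ is a power of an odd prime and $-1 \neq 1$ in $\mathbb{F}_q$. A primitive fourth root of unity is precisely an element $i \in \mathbb{F}_q^\times$ of multiplicative order exactly $4$, equivalently an element with $i^2 = -1$ (the order cannot be $1$ or $2$ since $-1 \neq 1$). Thus $\mathbb{F}_q$ contains a primitive fourth root of unity if and only if $-1$ is a square in $\mathbb{F}_q^\times$.

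Next I would invoke the structure of the cyclic group $\mathbb{F}_q^\times$, which has order $q - 1$: it contains an element of order $4$ if and only if $4 \mid (q-1)$, i.e. $q \equiv 1 \pmod 4$. (One can phrase the ``only if'' direction equivalently via Euler's criterion: $-1$ is a square in $\mathbb{F}_q$ iff $(-1)^{(q-1)/2} = 1$ iff $(q-1)/2$ is even iff $q \equiv 1 \pmod 4$.) This is exactly the observation already recorded in the remark preceding Corollary \ref{corfinitefield}, so it may simply be cited. Combining this equivalence with Theorem \ref{main2} gives that $\mathfrak{sl}_2(\mathbb{F}_q)$ has a COD, unique up to conjugacy, if and only if $q \equiv 1 \pmod 4$.

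I do not anticipate any genuine obstacle here: the corollary is a direct specialization of Theorem \ref{main2}, and the only content is the elementary number-theoretic fact about primitive fourth roots of unity in finite fields. The one point requiring a word of care is confirming that the hypothesis $\cha(\mathbb{F}_q) > 3$ is enough to apply Theorem \ref{main2} verbatim (it is, since that theorem's hypothesis is exactly $\cha(\mathbb{F}) > 3$) and that no low-characteristic degeneracy intervenes — for instance, one should note $q \equiv 1 \pmod 4$ already forces $q$ odd, consistent with $\cha(\mathbb{F}_q) > 3$. Hence the proof is a two-line argument: apply Theorem \ref{main2}, then replace ``primitive fourth root of unity in $\mathbb{F}_q$'' by ``$q \equiv 1 \pmod 4$''.
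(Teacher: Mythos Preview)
Your proposal is correct and matches the paper's own approach: the corollary is stated immediately after Theorem~\ref{main2} with no separate proof, so the paper likewise treats it as the direct specialization obtained by replacing ``$\mathbb{F}$ contains a primitive fourth root of unity'' with ``$q\equiv 1\pmod 4$'' (using the very fact about $-1$ being a square in $\mathbb{F}_q$ recorded before Corollary~\ref{corfinitefield}).
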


\begin{rmk}
If $q \equiv 3 \pmod{4}$, then $\mathfrak{sl}_2(\mathbb{F}_q)$ has only two classical orthogonal components which are 
$
\bigg \langle 
\begin{pmatrix}
1 & 0 \\
0 & -1
\end{pmatrix}
\bigg \rangle_{\mathbb{F}_q}
$
and
$
\bigg
\langle 
\begin{pmatrix}
0 & 1 \\
1 & 0
\end{pmatrix}
\bigg \rangle_{\mathbb{F}_q}
$.
\end{rmk}

\begin{thm} \label{main3}
Let $\mathbb{F}$ be a field of characteristic $> 3$. Then $\mathfrak{sl}_3(\mathbb{F})$ has a COD if and only if $\mathbb{F}$ contains a primitive cube root of unity.
\end{thm}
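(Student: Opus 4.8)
The plan is to take the sufficiency directly from Theorem~\ref{main1} and to concentrate on necessity. For the ``if'' direction one applies Theorem~\ref{main1} with $n=3=p$: in this case $\cha(\mathbb{F})>3$ is the same as $\cha(\mathbb{F})\notin\{2,3,p\}$, and hypothesis~(2) of that theorem reads precisely ``$\mathbb{F}$ contains a primitive cube root of unity'', so $\mathfrak{sl}_3(\mathbb{F})$ admits a COD.

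For necessity, I would argue as follows. Suppose $\mathfrak{sl}_3(\mathbb{F})$ has a COD. Since $\cha(\mathbb{F})>3$ does not divide $3$, $\mathfrak{sl}_3(\mathbb{F})$ is classical, so all of its classical Cartan subalgebras are conjugate \cite{S67}; as in the discussion preceding Lemma~\ref{H}, I may assume that one component of the COD is the diagonal Cartan subalgebra $H_0$. The decomposition has $3+1=4$ pairwise orthogonal components, so there is a component $H\neq H_0$, which by Lemma~\ref{H}(2) has the form $H=\langle A,B\rangle_{\mathbb{F}}$, where $A$ and $B$ are respectively the first and second displayed generators there, with parameters $a,b\in\mathbb{F}\setminus\{0\}$. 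The crucial point is the elementary matrix identity $A^{2}=aB$ (and $AB=ab\,I_{3}$): it shows that $H=\langle A,A^{2}\rangle_{\mathbb{F}}$ is the abelian subalgebra generated by the single matrix $A$, that $A^{3}=a^{2}b\,I_{3}$, and, using $\tr A=0$ and $\det A=a^{2}b$, that the characteristic polynomial of $A$ is $x^{3}-a^{2}b$. Because $H$ is a classical Cartan subalgebra of the classical $\mathfrak{sl}_3(\mathbb{F})$, it is conjugate to $H_0$, so $A=gDg^{-1}$ for some $g\in\GL_3(\mathbb{F})$ and some diagonal $D$; hence $A$ is diagonalizable over $\mathbb{F}$, and therefore $x^{3}-a^{2}b$ splits into linear factors over $\mathbb{F}$. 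As $\cha(\mathbb{F})\neq3$ and $a^{2}b\neq0$, this polynomial is separable, so its three roots are distinct and all lie in $\mathbb{F}$; the ratio of two of them is then a primitive cube root of unity in $\mathbb{F}$, which is what we wanted.

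I expect the identity $A^{2}=aB$ to be the only step calling for any thought: it collapses the two-dimensional Cartan subalgebra produced by Lemma~\ref{H} into the cyclic algebra generated by a matrix whose cube is a scalar, after which the conjugacy theorem for classical Cartan subalgebras forces that generator to be diagonalizable and hence drags in the cube roots of unity. The bookkeeping I would be careful with is (a)~that $\mathfrak{sl}_3(\mathbb{F})$ is classical for every field of characteristic $>3$, so that the conjugacy statement from \cite{S67} genuinely applies, and (b)~that lifting the conjugating element of $\PSL_3(\mathbb{F})$ to $\GL_3(\mathbb{F})$ really diagonalizes $A$ over $\mathbb{F}$ rather than merely over $\overline{\mathbb{F}}$. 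A variant that sidesteps the conjugacy step reads the conclusion straight off axiom~(iii)(a): relative to $H$ one has $\mathfrak{sl}_3(\mathbb{F})=\bigoplus_{\alpha}\mathfrak{L}_\alpha$ with each $\alpha$ an $\mathbb{F}$-valued functional on $H$, so the nonzero eigenvalues of $\ad A$ lie in $\mathbb{F}$; two of them are $\rho(\zeta-1)$ and $\rho\zeta(\zeta-1)$, where $\rho^{3}=a^{2}b$ and $\zeta$ is a primitive cube root of unity in $\overline{\mathbb{F}}$, and their quotient $\zeta$ therefore lies in $\mathbb{F}$.
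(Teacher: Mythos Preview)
Your proof is correct, and the necessity argument is genuinely different from the paper's. The paper uses \emph{two} non-diagonal components $H'_1,H'_2$ and computes the Killing form between their generators; the resulting orthogonality relations $cd+ad+ab=0$ and $cd+cb+ab=0$ force $c^{2}+ac+a^{2}=0$, which has a solution in $\mathbb{F}$ only if a primitive cube root of unity is present. You instead use a \emph{single} non-diagonal component $H=\langle A,B\rangle_{\mathbb{F}}$ and exploit the classicality of $H$ directly: the identity $A^{2}=aB$ collapses $H$ to the algebra generated by $A$, and conjugacy of $H$ with $H_0$ (via an element of $\PSL_3(\mathbb{F})$, lifted to $\SL_3(\mathbb{F})$) forces $A$ to be diagonalizable over $\mathbb{F}$, so $x^{3}-a^{2}b$ splits and the ratio of two roots gives the desired primitive cube root. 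Your approach avoids any Killing-form computation between the non-diagonal components and in fact proves a sharper statement: the mere existence of \emph{one} classical Cartan subalgebra orthogonal to $H_0$ already forces $\mathbb{F}$ to contain a primitive cube root of unity, which is exactly the content of the Remark following Corollary~\ref{corfinite3}. The paper's route, by contrast, makes explicit use of the orthogonality among the components and would generalize more readily to arguments where one wants to pin down the full decomposition (as is done later in Theorem~\ref{main3alclosed}). Your variant via axiom~(iii)(a) is also valid and neatly sidesteps the lift from $\PSL_3$ to $\GL_3$.
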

\begin{proof}
Suppose that $\mathbb{F}$ does not have any primitive cube root of unity but $\mathfrak{sl}_3(\mathbb{F})$ possesses a COD. Note that the decomposition of $\mathfrak{sl}_3(\mathbb{F})$ has four components. Then, up to conjugacy, we can assume that the classical Cartan subalgebra $H_0$ consisting of diagonal matrices is the first component and, by Lemma \ref{H}, all other components are of the forms 
\begin{align*}
H'_1 &= \Bigg \langle 
\begin{pmatrix}
0 & 1 & 0\\
0 & 0 & a \\
ab & 0 & 0
\end{pmatrix},
\begin{pmatrix}
0 & 0 & 1\\
ab & 0 & 0 \\
0 & b & 0
\end{pmatrix}
\Bigg \rangle_{\mathbb{F}}, \\
H'_2 &= \Bigg \langle 
\begin{pmatrix}
0 & 1 & 0\\
0 & 0 & c \\
cd & 0 & 0
\end{pmatrix},
\begin{pmatrix}
0 & 0 & 1\\
cd & 0 & 0 \\
0 & d & 0
\end{pmatrix}
\Bigg \rangle_{\mathbb{F}}, \\
H'_3 &= \Bigg \langle 
\begin{pmatrix}
0 & 1 & 0\\
0 & 0 & e \\
ef & 0 & 0
\end{pmatrix},
\begin{pmatrix}
0 & 0 & 1\\
ef & 0 & 0 \\
0 & f & 0
\end{pmatrix}
\Bigg \rangle_{\mathbb{F}}
\end{align*}
for some $a, b, c, d, e, f \neq 0$. By the orthogonality between $H'_1$ and $H'_2$, we have 
$
cd + ad + ab = 0 \text{ and } cd + cb + ab = 0.
$
Then
$d = a^{-1}cb$. Substituting $d$ in the first equation, we get
$
c^2 + a c + a^2 = 0.
$
Since there is no primitive cube root of unity in $\mathbb{F}$, the polynomial $t^2 + a t + a^2$ has no root in $\mathbb{F}$. This is a contradiction.
\end{proof}

We immediately have the following corollary.

\begin{cor}\label{corfinite3}
 Assume that $\cha (\mathbb{F}_q) > 3$. Then $\mathfrak{sl}_3(\mathbb{F}_q)$ has a COD if and only if $3 \mid (q - 1)$.    
\end{cor}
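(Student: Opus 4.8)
The plan is to prove Corollary \ref{corfinite3} by combining Theorem \ref{main3} with the standard cyclotomic criterion for finite fields, so essentially no new work is needed — only a translation of the field-theoretic condition ``$\mathbb{F}_q$ contains a primitive cube root of unity'' into an arithmetic condition on $q$. Note first that $\cha(\mathbb{F}_q) > 3$ guarantees $3 \nmid q$, so the polynomial $t^3 - 1$ is separable over $\mathbb{F}_q$ and the cube roots of unity form a cyclic subgroup of $\mathbb{F}_q^\times$ of order dividing $3$.

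The forward direction: if $3 \mid (q-1)$, then since $\mathbb{F}_q^\times$ is cyclic of order $q-1$, it has a (unique) subgroup of order $3$, whose non-identity elements are precisely the primitive cube roots of unity; hence $\mathbb{F}_q$ contains a primitive cube root of unity and Theorem \ref{main3} gives a COD of $\mathfrak{sl}_3(\mathbb{F}_q)$. For the converse: if $\mathbb{F}_q$ contains a primitive cube root of unity $\omega$, then $\omega$ is an element of multiplicative order exactly $3$ in $\mathbb{F}_q^\times$, so by Lagrange's theorem $3 \mid |\mathbb{F}_q^\times| = q - 1$. Equivalently, one can phrase both directions at once via Theorem \ref{main3}: $\mathfrak{sl}_3(\mathbb{F}_q)$ has a COD $\iff$ $\mathbb{F}_q$ has a primitive cube root of unity $\iff$ $3 \mid (q-1)$, where the last equivalence is the cyclicity-of-$\mathbb{F}_q^\times$ argument just given (and this is exactly the Cauchy-theorem style observation already invoked in the discussion preceding Corollary \ref{corfinitefield}).

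There is essentially no obstacle here — the content is entirely in Theorem \ref{main3}, which is assumed. The only point requiring a word of care is the hypothesis $\cha(\mathbb{F}_q) > 3$: it must be used to ensure that a primitive cube root of unity is genuinely an element of order $3$ rather than a degenerate case (if $\cha(\mathbb{F}_q) = 3$ then $t^3 - 1 = (t-1)^3$ and there would be no primitive cube root of unity at all, and separately $\mathfrak{sl}_3$ in characteristic $3$ is not even classical in the sense used in the paper). So I would state the proof in two or three sentences: invoke Theorem \ref{main3}, then observe that since $\mathbb{F}_q^\times$ is cyclic of order $q - 1$, it contains an element of order $3$ if and only if $3 \mid (q-1)$.

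\begin{proof}
By Theorem \ref{main3}, $\mathfrak{sl}_3(\mathbb{F}_q)$ has a COD if and only if $\mathbb{F}_q$ contains a primitive cube root of unity. Since $\mathbb{F}_q^\times$ is a cyclic group of order $q - 1$, it contains an element of order $3$ if and only if $3 \mid (q - 1)$; such an element is exactly a primitive cube root of unity (here $\cha(\mathbb{F}_q) > 3$ ensures $t^3 - 1$ is separable, so this is the only way a primitive cube root of unity can arise). Combining these two equivalences gives the claim.
\end{proof}
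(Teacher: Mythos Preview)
Your proof is correct and is precisely the argument the paper intends: the corollary is stated as immediate from Theorem \ref{main3}, and the only step is translating ``$\mathbb{F}_q$ contains a primitive cube root of unity'' into $3 \mid (q-1)$ via the cyclicity of $\mathbb{F}_q^\times$, exactly as you do.
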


\begin{rmk}
If $3 \nmid (q - 1)$, then the absence of primitive cube roots of unity implies that $\mathfrak{sl}_3(\mathbb{F}_q)$ does not possess an orthogonal pair of classical Cartan subalgebras.
\end{rmk}

Note that if $\mathbb{F}$ is algebraically closed, the COD of $\mathfrak{sl}_3(\mathbb{F})$ is immediately unique (up to conjugacy). 
The uniqueness result is the same for $\mathfrak{sl}_3$ over a finite field $\mathbb{F}_q$ with $3 \mid (q - 1)$, but the arguments of proof are different from the algebraically closed field case. 
We provide the precise details in the following discussion. 

\begin{thm} \label{main3alclosed}
Let $\mathbb{F}$ be an algebraically closed field of characteristic $> 3$. Then the COD of 
$\mathfrak{sl}_3(\mathbb{F})$ is unique (up to conjugacy).
\end{thm}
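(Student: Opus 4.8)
The plan is to reduce the problem to a normalization computation, much as in the proof of Theorem~\ref{main3}. Given any COD of $\mathfrak{sl}_3(\mathbb{F})$, after conjugating by an element of $\PSL_3(\mathbb{F})$ we may assume the first component is the diagonal Cartan subalgebra $H_0$. By Lemma~\ref{H}, the remaining three components are of the form $H'_1, H'_2, H'_3$ with parameters $(a,b), (c,d), (e,f)$ in $(\mathbb{F}^\times)^2$, and the existence of the COD forces $\mathbb{F}$ to contain a primitive cube root of unity $\omega$ by Theorem~\ref{main3}. The first step is to exploit the stabilizer of $H_0$ inside $\Aut(\mathfrak{sl}_3(\mathbb{F}))$: conjugation by diagonal matrices $\diag(x,y,1)$ and by the permutation/monomial matrices in the Weyl group $S_3$ preserves $H_0$ and acts on the parameters of each $H'_i$. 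I would compute this action explicitly and show that the diagonal torus action alone lets one rescale the parameters $(a,b)$ of $H'_1$ to $(1,1)$, reducing $H'_1$ to the ``standard'' $J$-decomposition component built from the matrices $D$ and $P$ of Section~\ref{COD} (with $u=\omega$).

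Once $H'_1$ is standardized, the second step is to determine which pairs $(c,d)$ can occur for a component $H'_2$ orthogonal to both $H_0$ and the standardized $H'_1$. The orthogonality relations between $H'_1$ and $H'_2$ give, as in the proof of Theorem~\ref{main3}, the equations $cd+ad+ab=0$ and $cd+cb+ab=0$ with $(a,b)=(1,1)$, i.e. $c^2+c+1=0$ and $d=cb/a=c$. Hence $c\in\{\omega,\omega^2\}$ and $d=c$, so there are at most two candidate components $H'_2$, and by symmetry the same two candidates for $H'_3$; since the three non-diagonal components must be distinct, $\{H'_2,H'_3\}$ is exactly $\{$the $c=\omega$ component, the $c=\omega^2$ component$\}$. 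This pins the whole decomposition down to a single configuration up to the residual symmetry, and I would then verify that the residual stabilizer of $(H_0,H'_1)$ — the subgroup of monomial matrices fixing both $H_0$ and the standardized $H'_1$, essentially a cyclic group coming from powers of $D$ together with the order-$3$ cyclic shift — either fixes or swaps $H'_2$ and $H'_3$, so that any two CODs are carried to one another by an element of $\Aut(\mathfrak{sl}_3(\mathbb{F}))=\PGL_3(\mathbb{F})\rtimes\langle\text{transpose-inverse}\rangle$.

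A cleaner packaging of the same idea: show directly that every COD is conjugate to the explicit $J$-decomposition $H_\infty\oplus H_0\oplus H_1\oplus H_2$ constructed in the proof of Theorem~\ref{main1} (case $p=3$, $m=1$). After reducing to $H_0$ plus the standardized $H'_1$ as above, one conjugates by the matrix $X$ of Section~\ref{COD} (which fixes $H_\infty$ and cyclically permutes $H_0,H_1,H_2$) to move the standardized $H'_1$ to $H_\infty$; the remaining components, being the unique orthogonal complement data, must then be $\{H_0,H_1,H_2\}$, completing the uniqueness.

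I expect the main obstacle to be the bookkeeping in the first step: correctly computing how the torus and Weyl-group conjugations act on the Lemma~\ref{H} parameters $(a,b)$, and verifying that this action is transitive enough to reach $(a,b)=(1,1)$ over an \emph{arbitrary} algebraically closed field — here algebraic closure is used precisely to extract the needed square roots (and cube root) for the conjugating diagonal matrix, which is why the finite-field case must be handled separately later. A secondary subtlety is making sure the residual stabilizer genuinely realizes the swap of $H'_2$ and $H'_3$ (rather than only fixing each), so that the two a priori distinct orderings of $\{\omega,\omega^2\}$ give conjugate CODs and not merely equal-as-sets ones; the transpose-inverse outer automorphism, or an odd permutation matrix, should supply this swap, and I would check that explicitly.
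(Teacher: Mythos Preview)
Your proposal is correct and follows essentially the same route as the paper's proof: normalize the first component to $H_0$, use Lemma~\ref{H} for the remaining three, exploit algebraic closure (via diagonal conjugation) to bring $H'_1$ to parameters $(1,1)$, and then solve the orthogonality equations to pin $H'_2,H'_3$ down to the $(\omega,\omega)$ and $(\omega^2,\omega^2)$ components. The paper is simply terser---it asserts the normalization ``because $\mathbb{F}$ is algebraically closed'' without spelling out the torus action---and it does not worry about swapping $H'_2$ and $H'_3$: since conjugacy of CODs is defined componentwise without reference to ordering, once both decompositions reduce to the same unordered set $\{H_0,H'_1,H'_2,H'_3\}$ the identity automorphism already witnesses their conjugacy, so your residual-stabilizer analysis and the alternative $X$-conjugation packaging are unnecessary (though not wrong).
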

\begin{proof}
By the above discussion, we can assume that one component of a COD is the subalgebra $H_0$ consisting of the diagonal matrices. According to Lemma \ref{H}, we have a COD 
\[
\mathfrak{sl}_n(\mathbb{F}) = H_0 \oplus H_1 \oplus H_2 \oplus H_3,
\]
where
\begin{align*}
H_1 &= \Bigg \langle 
\begin{pmatrix}
0 & 1 & 0\\
0 & 0 & 1 \\
1 & 0 & 0
\end{pmatrix},
\begin{pmatrix}
0 & 0 & 1\\
1 & 0 & 0 \\
0 & 1 & 0
\end{pmatrix}
\Bigg \rangle_{\mathbb{F}}, \\
H_2 &= \Bigg \langle 
\begin{pmatrix}
0 & 1 & 0\\
0 & 0 & a \\
ab & 0 & 0
\end{pmatrix},
\begin{pmatrix}
0 & 0 & 1\\
ab & 0 & 0 \\
0 & b & 0
\end{pmatrix}
\Bigg \rangle_{\mathbb{F}}, \\
H_3 &= \Bigg \langle 
\begin{pmatrix}
0 & 1 & 0\\
0 & 0 & c \\
cd & 0 & 0
\end{pmatrix},
\begin{pmatrix}
0 & 0 & 1\\
cd & 0 & 0 \\
0 & d & 0
\end{pmatrix}
\Bigg \rangle_{\mathbb{F}}
\end{align*}
for some $a, b, c, d \neq 0$, because $\mathbb{F}$ is algebraically closed. Using the similar arguments and equations in the proof of Theorem \ref{main3}, we have $a = b, c = d \in \{u, u^2\}$, where $u$ is a primitive cube root of unity. The orthogonality between $H_2$ and $H_3$ allows us to assume that $a = u$ and $c = u^2$. Therefore, the COD is uniquely defined.
\end{proof}

Now, we consider $\mathfrak{sl}_3(\mathbb{F}_q)$ with $\cha (\mathbb{F}_q) > 3$. Due to the nonexistence of cube roots for some elements in $\mathbb{F}_q$, this case becomes more intricate.

In general, if $3 \mid (q - 1)$, then a COD of $\mathfrak{sl}_3(\mathbb{F}_q)$ is of the form 
\[
\mathfrak{sl}_3(\mathbb{F}_q) = H_0 \oplus H_1 \oplus H_2 \oplus H_3
\]
where
\begin{align*}
H_1 &= \Bigg \langle 
\begin{pmatrix}
0 & 1 & 0\\
0 & 0 & a \\
ab & 0 & 0
\end{pmatrix},
\begin{pmatrix}
0 & 0 & 1\\
ab & 0 & 0 \\
0 & b & 0
\end{pmatrix}
\Bigg \rangle_{\mathbb{F}_q}, \\
H_2 &= \Bigg \langle 
\begin{pmatrix}
0 & 1 & 0\\
0 & 0 & ua \\
u^2ab & 0 & 0
\end{pmatrix},
\begin{pmatrix}
0 & 0 & 1\\
u^2ab & 0 & 0 \\
0 & ub & 0
\end{pmatrix}
\Bigg \rangle_{\mathbb{F}_q}, \\
H_3 &= \Bigg \langle 
\begin{pmatrix}
0 & 1 & 0\\
0 & 0 & u^2a \\
uab & 0 & 0
\end{pmatrix},
\begin{pmatrix}
0 & 0 & 1\\
uab & 0 & 0 \\
0 & u^2b & 0
\end{pmatrix}
\Bigg \rangle_{\mathbb{F}_q}
\end{align*}
for some $a, b \neq 0$ and a primitive cube root of unity $u$. This can be verified by the similar arguments as above. Thus, motivated by \cite{KT94}, we denote this decomposition by $J_3(a, b)$ (sometimes we may call it $J_3$-decomposition). The reader may check the terminology $J$-decomposition in this reference for clarification.

We investigate how many $J_3(a, b)$ that $\mathfrak{sl}_3(\mathbb{F}_q)$ can have (up to conjugacy).
Assume that $3 \mid (q - 1)$. Then $q - 1 = 3 l$ for some $l \in \mathbb{Z}_{>0}$. Let $x$ be a generator of $\mathbb{F}_q^\times$.
Then $|\braket{x^3}| = l$ and the index $[\mathbb{F}_q^\times: \braket{x^3}] = 3$. Let $z \in \mathbb{F}_q^\times \setminus \braket{x^3}$. We focus on these three distinct cosets: $\braket{x^3}, z\braket{x^3}$ and $z^2\braket{x^3}$. We use them to find the number of CODs of $\mathfrak{sl}_3(\mathbb{F}_q)$.

Our goal is to show that there are exactly two $J_3$-decompositions of $\mathfrak{sl}_3(\mathbb{F}_q)$, where $3 \mid (q - 1)$. Then we will see that only one $J_3$-decomposition for this Lie algebra is a COD.  
For convenience, we define a relation 
\[
J_3(a, b) \approx J_3(c, d) \Longleftrightarrow J_3(a, b) \text{ is conjugate to } J_3(c, d)
\]
for all $ a, b, c, d \neq 0$. 
Obviously, this is an equivalence relation.

The following lemma provides a sufficient condition to have two $J_3$-decompositions conjugate.

\begin{lemma}\label{appro}
	Suppose that $3 \mid (q - 1)$. For nonzero $a, b, c, d \in \mathbb{F}_q$, if $a^{-1} c$ and $b^{-1} d$ are in the same left coset defined by $\braket{x^3}$, then $J_3(a, b) \approx J_3(c, d)$.
\end{lemma}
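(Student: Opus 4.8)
The plan is to exhibit an explicit automorphism of $\mathfrak{sl}_3(\mathbb{F}_q)$ that sends $J_3(a,b)$ to $J_3(c,d)$, using the hypothesis to supply the scalars needed. Since $a^{-1}c \in r\braket{x^3}$ and $b^{-1}d \in r\braket{x^3}$ for the same coset representative $r \in \{1, z, z^2\}$, we may write $a^{-1}c = r s^3$ and $b^{-1}d = r t^3$ for some $s, t \in \mathbb{F}_q^\times$. First I would observe that conjugation by a diagonal matrix $\diag(\lambda_1, \lambda_2, \lambda_3)$ acts on the generators displayed in the definition of $J_3(a,b)$ by rescaling the off-diagonal entries, and I would compute exactly how the parameters $(a,b)$ transform under such a conjugation: conjugating the first generator of $H_1$ (the type with entries $1, a, ab$ in the cyclic positions) replaces it by a matrix of the same shape with rescaled entries, so that after normalising the $(1,2)$-entry back to $1$ one obtains a $J_3$-decomposition with new parameters that are $(a, b)$ multiplied by suitable cubes of ratios $\lambda_i/\lambda_j$. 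Since $H_0$ is preserved by every diagonal conjugation, this keeps the first component fixed and permutes the others, so it produces a genuine conjugacy $J_3(a,b) \approx J_3(a', b')$ where $a'/a$ and $b'/b$ range over the cube subgroup $\braket{x^3}$ as $\lambda_i$ vary.

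Next, to cover the coset representative $r$ itself, I would use the conjugation by the cyclic permutation matrix $P$ (together with possibly a power of $D$), which permutes the three components $H_0, H_1, \ldots$ and the three ``sheets'' within each $H_k$ indexed by the primitive cube root $u$; tracking its effect on the parameters shows it multiplies $(a,b)$ by a fixed nonzero scalar lying outside $\braket{x^3}$, or more precisely realises the substitution by the coset representative $z$ (after which one rescales by the diagonal automorphisms of the previous paragraph to land exactly on $(c,d)$). Combining the diagonal conjugations with the permutation conjugation, the group of achievable parameter changes $(a,b) \mapsto (a', b')$ is exactly $\{(a\mu, b\nu) : \mu\braket{x^3} = \nu\braket{x^3}\}$, which by hypothesis contains $(c,d)$. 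Chaining the two automorphisms then gives $J_3(a,b) \approx J_3(c,d)$.

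The main obstacle I expect is bookkeeping: one must verify that after conjugation the image really has the normalised shape of a $J_3$-decomposition (i.e. that the $(1,2)$-entries of the two generators can be simultaneously rescaled to $1$ by the \emph{same} diagonal matrix, and that the resulting three non-diagonal components are $H_1, H_2, H_3$ with parameters of the form $(a', b'), (ua', u^2a'b')$-type, etc.), and that the constraint coupling $\mu$ and $\nu$ to the \emph{same} coset is exactly what the diagonal-plus-permutation conjugations impose — no more, no less. This is the point where the factor-of-$3$ phenomenon enters: a diagonal conjugation by $\diag(\lambda_1,\lambda_2,\lambda_3)$ changes $a$ by a product of two of the ratios and $b$ by a product involving the third, and unwinding which products of $\lambda_i/\lambda_j$ are independently achievable shows the joint change $(\mu,\nu)$ lives precisely in the ``equal coset'' subgroup of $(\mathbb{F}_q^\times/\braket{x^3}) \times (\mathbb{F}_q^\times/\braket{x^3})$. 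Once that computation is pinned down, the conclusion is immediate from the coset hypothesis.
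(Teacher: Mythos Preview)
Your instinct to conjugate by a diagonal matrix is exactly right, and in fact that is \emph{all} the paper does: it writes down the single diagonal matrix
\[
\diag\bigl(1,\ \sqrt[3]{(a^{-1}c)^{2}\,b^{-1}d},\ \sqrt[3]{a^{-1}c\,(b^{-1}d)^{2}}\bigr)
\]
and observes that the two cube roots exist precisely because $a^{-1}c$ and $b^{-1}d$ lie in the same coset of $\langle x^{3}\rangle$.

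The gap in your outline is the assertion that diagonal conjugation only changes $(a,b)$ by elements of the cube subgroup $\langle x^{3}\rangle$ in each coordinate separately. If you actually carry out the computation you describe, conjugation by $\diag(1,\alpha,\beta)$ sends the normalised parameters $(a,b)$ to
\[
\bigl(a\,\alpha^{2}\beta^{-1},\ b\,\beta^{2}\alpha^{-1}\bigr).
\]
The multipliers $\mu=\alpha^{2}\beta^{-1}$ and $\nu=\beta^{2}\alpha^{-1}$ are \emph{not} both cubes in general; what is true is that $\mu\nu^{-1}=(\alpha\beta^{-1})^{3}$ is always a cube, and conversely every pair $(\mu,\nu)$ with $\mu\nu^{-1}\in\langle x^{3}\rangle$ arises (take $\alpha^{3}=\mu^{2}\nu$, $\beta^{3}=\mu\nu^{2}$, which are cubes by the coset hypothesis). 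So the diagonal conjugations already realise the entire ``equal coset'' subgroup you identify in your final paragraph, and the second step with $P$ is unnecessary.

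That second step is also not correct as stated: conjugation by $P$ does not multiply $(a,b)$ by a fixed scalar outside $\langle x^{3}\rangle$. A direct computation gives $(a,b)\mapsto\bigl((ab)^{-1},\,a\bigr)$ on the normalised parameters, which is not of the form $(a,b)\mapsto(ra,rb)$. So had the diagonal step genuinely fallen short, the patch you propose would not repair it in the way you describe. Once you correct the diagonal computation, the lemma follows in one line, exactly as in the paper.
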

\begin{proof}
	Assume that $a^{-1} c$ and $b^{-1} d$ are in the same left coset. Then $(a^{-1} c)^2 b^{-1} d$ and $a^{-1} c (b^{-1} d)^2$ have cube roots. Thus, we can use the matrix conjugation defined by
	\[
	\begin{pmatrix}
	1 &   & \\
	& \sqrt[3]{(a^{-1} c)^2 b^{-1} d} & \\
	&   &  \sqrt[3]{a^{-1} c (b^{-1} d)^2}
	\end{pmatrix}
	\]
	to map $J_3(a, b)$ to $J_3(c, d)$ isomorphically.
\end{proof}

 From the above lemma, it follows that if $a$ and $b$ are in the same coset defined by $\braket{x^3}$, then $J_3(a, b) \approx J_3(1, 1)$. On the other hand, if they are in different cosets, we will show that $J_3(a, b)$ is conjugate to either $J_3(1, z)$ or $J_3(1, z^2)$. As a result, we have at most three $J_3$-decompositions of $\mathfrak{sl}_3(\mathbb{F}_q)$ up to conjugacy, when $3 \mid (q - 1)$.

\begin{prop}\label{ab1z}
	Under the above setting, the following statements hold.
	\begin{enumerate}[(1)]
		\item If $a$ and $b$ are in the same coset, then $J_3(a, b) \approx J_3(1, 1)$.
		\item If $a$ and $b$ are in different cosets, then $J_3(a, b) \approx J_3(1, z)$ or $J_3(a, b) \approx J_3(1, z^2)$.
	\end{enumerate}
\end{prop}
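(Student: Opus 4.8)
The plan is to read off both parts directly from Lemma \ref{appro}, the only new ingredient being a translation of its hypothesis into a statement about the single ratio $a^{-1}b$. First I would record the elementary fact that, since $\mathbb{F}_q^\times$ is abelian, the elements $(a^{-1}c)^{-1}(b^{-1}d)$ and $(a^{-1}b)^{-1}(c^{-1}d)$ coincide, so that one lies in $\braket{x^3}$ if and only if the other does. In other words, ``$a^{-1}c$ and $b^{-1}d$ lie in the same coset of $\braket{x^3}$'' is equivalent to ``$a^{-1}b$ and $c^{-1}d$ lie in the same coset of $\braket{x^3}$''. With this reformulation, Lemma \ref{appro} reads: if $a^{-1}b$ and $c^{-1}d$ lie in the same coset, then $J_3(a,b) \approx J_3(c,d)$.

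For part (1), if $a$ and $b$ lie in the same coset then $a^{-1}b \in \braket{x^3}$; taking $(c,d) = (1,1)$, for which $c^{-1}d = 1 \in \braket{x^3}$, the hypothesis of the reformulated Lemma \ref{appro} is met, so $J_3(a,b) \approx J_3(1,1)$.

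For part (2), suppose $a$ and $b$ lie in different cosets, so that $a^{-1}b \notin \braket{x^3}$. I would first note that $1, z, z^2$ represent the three distinct cosets of $\braket{x^3}$: the quotient $\mathbb{F}_q^\times/\braket{x^3}$ has order $3$, and the image of $z$ is nontrivial by the choice $z \notin \braket{x^3}$, hence generates it. Therefore $a^{-1}b$ lies in $z\braket{x^3}$ or in $z^2\braket{x^3}$. In the former case I apply the reformulated Lemma \ref{appro} with $(c,d) = (1,z)$, for which $c^{-1}d = z$ is in the same coset as $a^{-1}b$, obtaining $J_3(a,b) \approx J_3(1,z)$; in the latter case I use $(c,d) = (1,z^2)$ to get $J_3(a,b) \approx J_3(1,z^2)$.

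I do not expect a genuine obstacle here: the substantive content (the explicit diagonal conjugating matrix) is already contained in Lemma \ref{appro}, and this proposition merely packages the possibilities into the three normal forms $(1,1)$, $(1,z)$, $(1,z^2)$. The only steps needing a line of justification are the abelian-group reformulation of the coset condition and the observation that $1, z, z^2$ exhaust the cosets of $\braket{x^3}$; both are routine.
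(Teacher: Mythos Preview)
Your proof is correct and follows essentially the same approach as the paper: both parts are deduced directly from Lemma~\ref{appro}. Your reformulation of the hypothesis of Lemma~\ref{appro} in terms of the single ratio $a^{-1}b$ (rather than the pair $a^{-1}c,\,b^{-1}d$) is a clean repackaging that collapses the paper's explicit enumeration of the six ordered coset pairs $(\overline{z}^i,\overline{z}^j)$ with $i\neq j$ into a single trichotomy on the coset of $a^{-1}b$; the underlying content is identical.
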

\begin{proof}
We only need to prove (2).
Consider the isomorphism $\varphi : \mathbb{F}_q^\times / \braket{x^3} \rightarrow (\mathbb{Z}_3, +)$ that sends $z^i\braket{x^3}$ to $i$.
 For $i = 0, 1, 2$, let $\overline{z}^i$ denote the coset $z^i\braket{x^3}$. Now, assume that $a$ and $b$ are in two distinct cosets, say $a \in \overline{z}^i$ and $b \in \overline{z}^j$, we write $(a, b) \in (\overline{z}^i, \overline{z}^j)$. Thus, if $(c, d) \in (\overline{z}^s, \overline{z}^t)$, then $(a + c, b + d) \in (\overline{z}^{i + s}, \overline{z}^{j + t})$. Note that $(1, z) \in (\overline{z}^0, \overline{z}^1)$ and so $(1, z^{-1}) \in (\overline{z}^0, \overline{z}^2)$.
	By Lemma \ref{appro}, for any $(a, b)$ in $(\overline{z}^0, \overline{z}^1), (\overline{z}^2, \overline{z}^0)$ and $(\overline{z}^1, \overline{z}^2)$, we have $J_3(a, b) \approx J_3(1, z)$. Finally, $(1, z^2) \in (\overline{z}^0, \overline{z}^2)$ and so $( 1, z^{-2}) \in (\overline{z}^0, \overline{z}^1)$. By Lemma \ref{appro} again, for any $(a, b)$ in $(\overline{z}^1, \overline{z}^0), (\overline{z}^0, \overline{z}^2)$ and $(\overline{z}^2, \overline{z}^1)$, we have $J_3(a, b) \approx J_3(1, z^2)$.
\end{proof}

To conclude that there are exactly two $J_3$-decompositions of $\mathfrak{sl}_3(\mathbb{F}_q)$ up to conjugacy, when $3 \mid (q - 1)$, we prove that $J_3(1, z)$ is conjugate to $J_3(1, z^2)$ but not to $J_3(1, 1)$.

\begin{thm}\label{numberofthree}
Assume that $\cha (\mathbb{F}_q) > 3$. If $3 \mid (q - 1)$, then $\mathfrak{sl}_3(\mathbb{F}_q)$ has exactly two $J_3$-decompositions up to conjugacy, which are represented by $J_3(1, 1)$ and $J_3(1, z)$.
\end{thm}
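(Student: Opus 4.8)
The plan is to combine Proposition \ref{ab1z} with two remaining conjugacy facts: that $J_3(1, z) \approx J_3(1, z^2)$, and that $J_3(1, 1) \not\approx J_3(1, z)$. Proposition \ref{ab1z} already shows every $J_3(a, b)$ is conjugate to one of $J_3(1, 1)$, $J_3(1, z)$, $J_3(1, z^2)$, so once these two facts are established we conclude there are exactly two classes, represented by $J_3(1, 1)$ and $J_3(1, z)$.

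For the first fact, $J_3(1, z) \approx J_3(1, z^2)$, the natural approach is to exhibit an explicit automorphism of $\mathfrak{sl}_3(\mathbb{F}_q)$ carrying one decomposition to the other. Lemma \ref{appro} does not apply directly (since $1$ and $z$ lie in different cosets, as do $1$ and $z^2$), so I would look outside the diagonal torus $\PGL_3$-conjugations used there. Two candidates: (i) the transpose-inverse automorphism $A \mapsto -A^{\top}$ (an outer automorphism of $\mathfrak{sl}_3$), which swaps the two generators of each $H_i$-type subalgebra and should interchange the roles of $a$ and $b$, sending $J_3(a, b)$ to $J_3(b, a)$; combined with the observation that $J_3(1, z) \approx J_3(z, 1)$ would follow from a diagonal conjugation matching the coset pattern $(\overline{z}^0, \overline{z}^1) \leftrightarrow (\overline{z}^1, \overline{z}^0)$ — but by Proposition \ref{ab1z}, $(\overline{z}^1, \overline{z}^0)$ already lies in the $J_3(1, z^2)$ class, giving exactly what we want. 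Alternatively (ii) a permutation-matrix conjugation that cyclically permutes coordinates, which permutes the roots and can be checked to send $u \leftrightarrow u^2$ in the structure constants. I would carry out option (i) since the $a \leftrightarrow b$ symmetry is cleanest.

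For the second fact, $J_3(1, 1) \not\approx J_3(1, z)$, I need a conjugacy invariant distinguishing the two decompositions. Since all four components of any $J_3$-decomposition are classical Cartan subalgebras, and conjugation is by $\Aut(\mathfrak{sl}_3(\mathbb{F}_q))$, the invariant must detect something about how the three "non-diagonal" components sit relative to each other and to $H_0$. The key idea: examine the eigenvalues (or the characteristic polynomials / splitting fields) of the basis matrices, or more robustly, look at products of generators across different components and whether certain scalars arising are cubes in $\mathbb{F}_q^\times$. Concretely, the generator $\begin{pmatrix} 0 & 1 & 0 \\ 0 & 0 & a \\ ab & 0 & 0 \end{pmatrix}$ has characteristic polynomial $t^3 - a^2 b$, so its eigenvalues live in $\mathbb{F}_q$ iff $a^2 b$ is a cube; being classical forces $a^2b$ (and by the other generator $ab^2$) to be cubes, hence $a, b$ individually land in a coset structure that is a genuine $\pmod 3$ invariant. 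For $J_3(1, 1)$ the relevant data is the trivial coset throughout, while for $J_3(1, z)$ it involves a nontrivial coset; I would package this as: any automorphism fixing $H_0$ acts on the other components by an element of $\PGL_3(\mathbb{F}_q)$ normalizing $H_0$, i.e. a monomial matrix, and track how diagonal-times-permutation conjugation transforms the pair $(a, b)$ modulo cubes — showing the orbit of $(1,1)$ under this action is exactly the "same coset" family of Proposition \ref{ab1z}(1), disjoint from the "different coset" family.

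The main obstacle is the second fact: ruling out conjugacy requires controlling \emph{all} of $\Aut(\mathfrak{sl}_3(\mathbb{F}_q))$, not just inner automorphisms. The structure of this group (inner automorphisms $\PSL_3$, the diagonal/graph automorphisms, and — crucially in characteristic $p$ — the Frobenius/field automorphisms) must be used: a field automorphism could a priori move $z$ to another non-cube, so I must verify it preserves the coset of $z$ modulo cubes (which it does, since Frobenius permutes $\mathbb{F}_q^\times$ and commutes with the cubing map up to the automorphism, fixing $\langle x^3 \rangle$ setwise). Once the automorphism group is pinned down and each type of automorphism is shown to preserve the "same coset vs. different coset" dichotomy, the non-conjugacy follows. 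I expect the argument to parallel \cite[Lemma 2.1]{SZ20} and the discussion on \cite[Page 32]{KT94}, adapting their torus-normalizer computation to keep careful track of the cube-class obstruction peculiar to the finite field.
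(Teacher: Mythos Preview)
The central gap is in your non-conjugacy argument. Your premise that ``all four components of any $J_3$-decomposition are classical Cartan subalgebras'' is false: the first generator of $H_1$ in $J_3(1,z)$ has characteristic polynomial $t^3 - z$, which does not split over $\mathbb{F}_q$ since $z$ is a non-cube, so $H_1$ (and likewise $H_2, H_3$) is not classical --- the paper records exactly this immediately after the theorem. This mistaken premise sends you hunting for a subtler invariant via the normalizer of $H_0$, and that route has its own holes: the reduction to automorphisms fixing $H_0$ is never justified, and the worry about Frobenius is misplaced because field automorphisms are only semilinear and do not lie in $\Aut(\mathfrak{sl}_3(\mathbb{F}_q))$. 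Ironically, correcting the premise finishes the proof at once: classicality of a Cartan subalgebra is phrased intrinsically in terms of the bracket, hence preserved by every $\mathbb{F}_q$-linear Lie algebra automorphism; $J_3(1,1)$ has four classical components while $J_3(1,z)$ has only one, so no automorphism can carry one decomposition to the other. This is far shorter than the paper's own argument, which supposes an automorphism $\varphi$ exists and runs through all $48$ possible assignments of $\varphi(J'_{(1,0)}), \varphi(J'_{(2,0)}), \varphi(J'_{(0,1)}), \varphi(J'_{(0,2)})$ to scalar multiples of the $J_{(a',b')}$, deriving in each case (two by hand, the remaining $46$ via Mathematica) that $z$ would have to be a cube.

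Your idea for $J_3(1,z) \approx J_3(1,z^2)$ via the outer automorphism $A \mapsto -A^{\top}$ is sound and more conceptual than the paper's route, which writes down an explicit map $\psi$ on the eight basis elements $J''_{(a,b)}$ and verifies every bracket relation directly. One correction, though: applying $A \mapsto -A^\top$ to the generators in Lemma~\ref{H} sends $J_3(a,b)$ to $J_3(a^{-1}, b^{-1})$, not to $J_3(b,a)$ as you claim. This still gives what you need, since $(1, z^{-1})$ has coset pattern $(\overline{z}^0, \overline{z}^2)$ and hence $J_3(1, z^{-1}) \approx J_3(1, z^2)$ by Proposition~\ref{ab1z}.
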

\begin{proof}
	The 
	 $J_3(1, 1)$ of $\mathfrak{sl}_3{}(\mathbb{F}_q)$ is 
	\[
	\mathfrak{sl}_3(\mathbb{F}_q) = \braket{J_{(1, 0)}, J_{(2, 0)}} \oplus \braket{J_{(0, 1)}, J_{(0, 2)}} \oplus \braket{J_{(1, 1)}, J_{(2, 2)}} \oplus \braket{J_{(2, 1)}, J_{(1, 2)}},
	\]
	where $J_{(a, b)} = D^a P^b$ and 
	\[
	D= \diag (1, u, u^2),
	P=
	\begin{pmatrix}
	0 & 0 & 1 \\
	1 & 0 & 0 \\
	0 & 1 & 0
	\end{pmatrix}.
	\]
	Note that 	$[J_{(a, b)}, J_{(c, d)}] = (u^{-bc} - u^{- ad})J_{(a + c, b + d)}$ for all $a, b \in \{0, 1, 2\}$ (cf. eq. (\ref{brack})). 
	
	The following description is for $J_3(1, z)$. Let 
	\[
	P_0 = I_3,
	P_1 =
	\begin{pmatrix}
	0 & 0 & 1 \\
	z & 0 & 0 \\
	0 & z & 0 
	\end{pmatrix}
	\text{ and }
	P_2 =
	\begin{pmatrix}
	0 & 1 & 0 \\
	0 & 0 & 1 \\
	z & 0 & 0 
	\end{pmatrix}.
	\]
	Define $J'_{(a, b)} = D^a P_b$ for all $a, b \in \{0, 1, 2\}$. Let $m_{bd} = \min\{b, d\} \pmod{2}$. Then 
	\begin{align}\label{J'}
	[J'_{(a, b)}, J'_{(c, d)}] = z^{m_{bd}}(u^{-bc} - u^{-ad})J'_{(a + c, b + d)}.
	\end{align}
	The $J_3(1, z)$ of $\mathfrak{sl}_3{}(\mathbb{F}_q)$ is 
	\[
	\mathfrak{sl}_3(\mathbb{F}_q) = \braket{J'_{(1, 0)}, J'_{(2, 0)}} \oplus \braket{J'_{(0, 1)}, J'_{(0, 2)}} \oplus \braket{J'_{(1, 1)}, J'_{(2, 2)}} \oplus \braket{J'_{(2, 1)}, J'_{(1, 2)}}.
	\]
	Suppose that $J_3(1, z) \approx J_3(1, 1)$ by an automorphism $\varphi$. We show that this leads to a contradiction. Since $\varphi$ sends one component of $J_3(1, z)$ to exactly one component of $J_3(1, 1)$, for each $(a, b)$, by (\ref{J'}) there exists a unique $(a', b')$ such that $\varphi(J'_{(a, b)}) = \alpha_{a, b}J_{(a', b')}$ where $\alpha_{a, b} \in \mathbb{F}_q^\times$. We consider all possible cases. 
     \\
	{\bf Case 1}
	\begin{align*}
	\varphi : &J'_{(1, 0)} \mapsto a J_{(1, 0)}, J'_{(0, 1)} \mapsto c J_{(0, 1)} \\ &J'_{(2, 0)} \mapsto b J_{(2, 0)}, J'_{(0, 2)} \mapsto d J_{(0, 2)}  
	\end{align*}
	for some $a, b, c, d \in \mathbb{F}_q^\times$. Using (\ref{J'}), we obtain 
	\begin{enumerate}[(i)]
		\item $\varphi (J'_{(1, 1)}) = a c J_{(1, 1)}$,
		\item $\varphi (J'_{(2, 2)}) = b d J_{(2, 2)}$,
		\item $\varphi (J'_{(1, 2)}) = a d J_{(1, 2)}$.
	\end{enumerate}
	Since $$\varphi([J'_{(0, 1)}, J'_{(1, 1)}]) = [\varphi(J'_{(0, 1)}), \varphi(J'_{(1, 1)})]$$ and  $$\varphi([J'_{(0, 1)}, J'_{(2, 2)}]) = [\varphi(J'_{(0, 1)}), \varphi(J'_{(2, 2)})],$$ by (i), (ii) and (iii), we have $z d = c^2$ and $z = c d$, accordingly. This forces $z = d^3$ which contradicts the choice of $z$.\\
	{\bf Case 2}
	\begin{align*}
	\varphi : &J'_{(2, 0)} \mapsto a J_{(1, 0)}, J'_{(0, 1)} \mapsto c J_{(0, 1)} \\ &J'_{(1, 0)} \mapsto b J_{(2, 0)}, J'_{(0, 2)} \mapsto d J_{(0, 2)}  
	\end{align*}
	for some $a, b, c, d \in \mathbb{F}_q^\times$. Using (\ref{J'}), we obtain 
	\begin{enumerate}[(i)]
		\item $\varphi (J'_{(1, 1)}) = (\frac{b c}{1 + u}) J_{(2, 1)}$,
		\item $\varphi (J'_{(2, 2)}) = (\frac{a d}{1 + u}) J_{(1, 2)}$,
		\item $\varphi (J'_{(1, 2)}) =  b d(1 + u) J_{(2, 2)}$.
	\end{enumerate}
	Since $$\varphi([J'_{(0, 1)}, J'_{(1, 1)}]) = [\varphi(J'_{(0, 1)}), \varphi(J'_{(1, 1)})]$$ and  $$\varphi([J'_{(0, 1)}, J'_{(2, 2)}]) = [\varphi(J'_{(0, 1)}), \varphi(J'_{(2, 2)})],$$ by (i), (ii) and (iii), we have $z d (1 + u)^3 = c^2$ and $z = c d$, accordingly. This forces $z = d^3(1 + u)^3$ which contradicts the choice of $z$.
	
    Here, we have provided the details for only two cases.  For the remaining cases, we refer the reader to Appendix \ref{Mathematicacode} for a Mathematica code to verify that $z$ would be a cube unit in $\mathbb{F}_q$, thus reaching a contradiction. 
		
	Finally, let 
	\[
	Q_0 = I_3,
	Q_1 =
	\begin{pmatrix}
	0 & 0 & 1 \\
	z^2 & 0 & 0 \\
	0 & z^2 & 0 
	\end{pmatrix}
	\text{ and }
	Q_2 =
	\begin{pmatrix}
	0 & 1 & 0 \\
	0 & 0 & 1 \\
	z^2 & 0 & 0 
	\end{pmatrix}.
	\]
	Define $J''_{(a, b)} = D^a Q_b$ for all $a, b \in \{0, 1, 2\}$. Let $m_{bd} = \min\{b, d\} \pmod{2}$. Then 
	\begin{align}\label{J''}
	[J''_{(a, b)}, J''_{(c, d)}] = z^{2 m_{bd}}(u^{-bc} - u^{-ad})J''_{(a + c, b + d)}.
	\end{align}
	The $J_3(1, z^2)$ of $\mathfrak{sl}_3{}(\mathbb{F}_q)$ is 
	\[
	\mathfrak{sl}_3(\mathbb{F}_q) = \braket{J''_{(1, 0)}, J''_{(2, 0)}} \oplus \braket{J''_{(0, 1)}, J''_{(0, 2)}} \oplus \braket{J''_{(1, 1)}, J''_{(2, 2)}} \oplus \braket{J''_{(2, 1)}, J''_{(1, 2)}}.
	\]
	We find an automorphism for $\mathfrak{sl}_3(\mathbb{F}_q)$ that maps $J_3(1, z^2)$ to $J_3(1, z)$. To construct such an automorphism, we define a map $\psi$ on the basis of $J_3(1, z^2)$ as follows:
	\begin{align*}
	&J''(1, 0) \mapsto - J'(1, 0),  && J''(2, 0) \mapsto - J'(2, 0), \\ & J''(0, 1) \mapsto -z J'(0, 2), && J''(0, 2) \mapsto - J'(0, 1), \\ & J''(1, 1) \mapsto \frac{z}{1 + u} J'(1, 2), && J''(2, 2) \mapsto \frac{1}{1 + u} J'(2, 1) \\ & J''(1, 2) \mapsto (1 + u) J'(1, 1) && J''(2, 1) \mapsto z(1 + u)J'(2, 2).
	\end{align*}
	We extend $\psi$ linearly to the entire $J_3(1, z^2)$. Then	by using the fact that $u$ is a primitive cube root of unity together with  (\ref{J'}) and (\ref{J''}), we see that $\psi$ is an automorphism for $\mathfrak{sl}_3(\mathbb{F}_q)$. For the details of this part, we refer the reader to Appendix \ref{psi}. 
\end{proof}

Note that the component $H_1$ of $J_3(1, z)$ is not diagonalizable because $z$ is non-cube. Thus, $H_1$ is not conjugate to $H_0$, and so it is not classical. It implies that $J_3(1, z)$ is not a COD. We present the following theorem as a conclusion.

\begin{thm}\label{unique3}
    Assume that $\cha (\mathbb{F}_q) > 3$. If $3 \mid (q - 1)$, then $\mathfrak{sl}_3(\mathbb{F}_q)$ has a unique COD up to conjugacy represented by $J_3(1, 1)$.
\end{thm}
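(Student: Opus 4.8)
The plan is to assemble the statement from results already in hand: Theorem~\ref{main3} guarantees that a COD of $\mathfrak{sl}_3(\mathbb{F}_q)$ exists precisely when $3 \mid (q-1)$, so it remains only to pin down uniqueness up to conjugacy. First I would invoke the discussion preceding Theorem~\ref{numberofthree}: after conjugating so that the diagonal Cartan subalgebra $H_0$ is the first component, Lemma~\ref{H} forces every other component into the displayed two-parameter shape, and the orthogonality relations derived in the proof of Theorem~\ref{main3} (namely $c^2 + ac + a^2 = 0$, etc.) force the remaining three components to take exactly the $H_1, H_2, H_3$ form parametrized by $(a,b)$ with a fixed primitive cube root of unity $u$. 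Hence \emph{every} COD of $\mathfrak{sl}_3(\mathbb{F}_q)$ is, up to conjugacy, one of the decompositions $J_3(a,b)$.

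Next I would combine Proposition~\ref{ab1z} and Theorem~\ref{numberofthree}: Proposition~\ref{ab1z} shows that every $J_3(a,b)$ is conjugate to one of $J_3(1,1)$, $J_3(1,z)$, or $J_3(1,z^2)$, and Theorem~\ref{numberofthree} refines this to exactly two conjugacy classes, $J_3(1,1)$ and $J_3(1,z)$, with $J_3(1,z) \approx J_3(1,z^2)$. So the set of CODs of $\mathfrak{sl}_3(\mathbb{F}_q)$, viewed up to conjugacy, is contained in $\{\,J_3(1,1),\ J_3(1,z)\,\}$.

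The final — and genuinely substantive — step is to eliminate $J_3(1,z)$ as a COD. Here I would argue, as in the remark just before this theorem, that the component $H_1$ of $J_3(1,z)$ is spanned by matrices of the form $D^a P_b$ with $P_1, P_2$ built from the noncube element $z$; its generator $J'_{(1,1)}$ has characteristic polynomial $t^3 - (\text{unit involving }z)$, and since $z \notin \langle x^3 \rangle$ this polynomial has no root in $\mathbb{F}_q$, so $H_1$ is not diagonalizable over $\mathbb{F}_q$. A classical Cartan subalgebra must be conjugate (in $\mathrm{PSL}_3(\mathbb{F}_q)$) to $H_0$ by the conjugacy theorem of \cite{S67}, hence diagonalizable; therefore $H_1$ is not classical, and $J_3(1,z)$ is not a COD. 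Consequently the only remaining candidate, $J_3(1,1)$, is the unique COD up to conjugacy (and it genuinely is one, by Theorem~\ref{main3} or directly by Corollary~\ref{corfinitefield}).

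I expect the main obstacle to be bookkeeping rather than conceptual: one must be careful that the reductions via Lemma~\ref{H} really do exhaust \emph{all} CODs (not merely all $J_3$-type ones) — i.e., that the orthogonality constraints among $H_0, H_1', H_2', H_3'$ in the proof of Theorem~\ref{main3}, together with the requirement that each component be classical hence diagonalizable, force the specific $u$-twisted form recorded before Theorem~\ref{numberofthree}. Once that is in place, the theorem follows by concatenating Theorem~\ref{main3}, Proposition~\ref{ab1z}, Theorem~\ref{numberofthree}, and the noncube/non-diagonalizability observation; no new computation beyond what the cited results supply is needed.
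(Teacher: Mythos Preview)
Your proposal is correct and follows essentially the same route as the paper: reduce every COD to a $J_3(a,b)$ via Lemma~\ref{H} and the orthogonality constraints, apply Proposition~\ref{ab1z} and Theorem~\ref{numberofthree} to cut down to $J_3(1,1)$ and $J_3(1,z)$, and then discard $J_3(1,z)$ because its non-diagonal component is generated by a matrix with characteristic polynomial $t^3-(\text{non-cube})$, hence is not diagonalizable over $\mathbb{F}_q$ and so not classical. One small bookkeeping slip: the element $J'_{(1,1)}$ you name actually lies in the \emph{third} component of $J_3(1,z)$, not in $H_1=\langle J'_{(0,1)},J'_{(0,2)}\rangle$; the paper argues with $H_1$ (whose generator has characteristic polynomial $t^3-z$), but either component yields the same non-diagonalizability conclusion.
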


\section{Concluding Remark}
The standard OD for a simple Lie algebra over $\mathbb{C}$ only requires each component to be a Cartan subalgebra. As mentioned, for a modular Lie algebra, the Cartan subalgebras do not have the same properties as in the complex case. We thus proposed that a suitable generalization of OD in the modular case should be COD, as defined in this paper. The study of COD is more optimal, as classical Cartan subalgebras of a classical Lie algebra possess many desirable properties. Moreover, there are several ingredients that we can utilize to delve deeper into the intricacies of the problem.

The study of COD for $\mathfrak{sl}_n(\mathbb{F})$, when $n = 4, 5$, should be discussed in the next step. One may consider the COD problem for the case of $n = 6$. The results of this problem could have significant implications for the Winnie-the-Pooh problem over the complex numbers. Exploring the potential applications of the COD problem still requires further attention. We plan to study some of these topics in our future paper. 

\section*{Acknowledgment}
This work (Grant No. RGNS 64-096) was supported by Office of the Permanent Secretary, Ministry of Higher Education, Science, Research and Innovation (OPS MHESI), Thailand Science Research and Innovation (TSRI) and King Mongkut's University of Technology Thonburi.
The second author would like to thank Professor Yi Ming Zou for introducing the orthogonal decomposition problem and for his earlier suggestions.

\section{Appendix}
	\subsection{Mathematica code for checking $J_3(1, z)$ and $J_3(1, 1)$} \label{Mathematicacode}
	Recalling the setting of the proof of Theorem \ref{numberofthree},  we need to consider the following $J_3$-decompositions.
	\begin{enumerate}[(1)]
		\item $J_3(1, 1)$:
		\[
		\mathfrak{sl}_3(\mathbb{F}_q) = \braket{J_{(1, 0)}, J_{(2, 0)}} \oplus \braket{J_{(0, 1)}, J_{(0, 2)}} \oplus \braket{J_{(1, 1)}, J_{(2, 2)}} \oplus \braket{J_{(2, 1)}, J_{(1, 2)}},
		\]
		where $J_{(a, b)} = D^a P^b$ and 
		\[
		D= \diag (1, u, u^2), 
		P=
		\begin{pmatrix}
		0 & 0 & 1 \\
		1 & 0 & 0 \\
		0 & 1 & 0
		\end{pmatrix}.
		\]
		\item  $J_3(1, z)$:
		\[
		\mathfrak{sl}_3(\mathbb{F}_q) = \braket{J'_{(1, 0)}, J'_{(2, 0)}} \oplus \braket{J'_{(0, 1)}, J'_{(0, 2)}} \oplus \braket{J'_{(1, 1)}, J'_{(2, 2)}} \oplus \braket{J'_{(2, 1)}, J'_{(1, 2)}},
		\]
		where $J'_{(a, b)} = D^a P_b$ and 
		\[
		D= \diag (1, u, u^2), 
		P=
		\begin{pmatrix}
		0 & 0 & 1 \\
		1 & 0 & 0 \\
		0 & 1 & 0
		\end{pmatrix}.
		\] 
		\end{enumerate}
	The input of the above matrices is:
\begin{align*}
&P = \{\{0,0,1\},\{1,0,0\},\{0,1,0\}\}; \\ 
&p[0]=\text{IdentityMatrix}[3];\\
&p[1]=P;\\
&p[2]=P.P;\\
&p'[0]=\text{IdentityMatrix}[3];\\
&p'[1]=\{\{0,0,1\},\{z,0,0\},\{0,z,0\}\};\\
&p'[2]=\{\{0,1,0\},\{0,0,1\},\{z,0,0\}\};\\
&p''[0]=\text{IdentityMatrix}[3];\\
&p''[1]=\{\{0,0,1\},\{z^2,0,0\},\{0,z^2,0\}\};\\
&p''[2]=\{\{0,1,0\},\{0,0,1\},\{z^2,0,0\}\};\\
&d[0]=\text{IdentityMatrix}[3];\\
&d[i \_]\text{:=}\text{DiagonalMatrix}[\{1, u^{\text{Mod}[i,3]}, u^{\text{Mod}[2 i,3]} \}];\\
&f[A \_, B \_]\text{:=}A.B-B.A \\
&J[\text{a$\_$},\text{b$\_$}]\text{:=}d[\text{Mod}[a,3]].p[\text{Mod}[b,3]]\\
&J'[\text{a$\_$},\text{b$\_$}]\text{:=}d[\text{Mod}[a,3]].p'[\text{Mod}[b,3]]\\
&J''[\text{a$\_$},\text{b$\_$}]\text{:=}d[\text{Mod}[a,3]].p''[\text{Mod}[b,3]]
\end{align*}
Suppose that  $J_3(1, 1) \approx  J_3(1, z)$ by the map $\varphi$. Then we consider: 
\begin{align*}
&\varphi (J'[1,0])=\alpha  J[m[i],n[i]];\\
&\varphi (J'[2,0])=\beta  J[k[i],l[i]];\\
&\varphi (J'[0,1])=\gamma  J[s[i],t[i]];\\
&\varphi (J'[0,2])=\delta  J[x[i],y[i]];\\
&\varphi (J'[1,1])=\text{Simplify}\left[\frac{f\left[ \varphi (J'[1,0]),\varphi(J'[0,1])\right]}{1-u^2}\right];\\
&\varphi (J'[2,2])=\text{Simplify}\left[\frac{f\left[\varphi(J'[2,0]),\varphi(J'[0,2])\right]}{1-u^2}\right];\\
&\varphi (J'[1,2])=\text{Simplify}\left[\frac{f\left[\varphi(J'[1,0]),\varphi (J'[0,2])\right]}{1-u}\right];
\end{align*}
and check all of the following cases for $m[i], n[i], k[i], l[i], s[i], t[i], x[i]$ and $y[i]$.
\begin{align*}
&m [1] = 1; n [1] = 0; k[1] = 2; l[1] = 0; s[1] = 0; t[1] = 1; 
x[1] = 0; y[1] = 2; \\
&m [2] = 1; n [2] = 0; k[2] = 2; l[2] = 0; s[2] = 0; t[2] = 2; 
x[2] = 0; y[2] = 1; \\
&m [3] = 1; n [3] = 0; k[3] = 2; l[3] = 0; s[3] = 1; t[3] = 1; 
x[3] = 2; y[3] = 2;\\
&m [4] = 1; n [4] = 0; k[4] = 2; l[4] = 0; s[4] = 2; t[4] = 2; 
x[4] = 1; y[4] = 1;\\
&m [5] = 1; n [5] = 0; k[5] = 2; l[5] = 0; s[5] = 2; t[5] = 1; 
x[5] = 1; y[5] = 2;\\
&m [6] = 1; n [6] = 0; k[6] = 2; l[6] = 0; s[6] = 1; t[6] = 2; 
x[6] = 2; y[6] = 1;\\
&m [7] = 2; n [7] = 0; k[7] = 1; l[7] = 0; s[7] = 0; t[7] = 1; 
x[7] = 0; y[7] = 2;\\
&m [8] = 2; n [8] = 0; k[8] = 1; l[8] = 0; s[8] = 0; t[8] = 2; 
x[8] = 0; y[8] = 1;\\
&m [9] = 2; n [9] = 0; k[9] = 1; l[9] = 0; s[9] = 1; t[9] = 1; 
x[9] = 2; y[9] = 2;\\
&m [10] = 2; n [10] = 0; k[10] = 1; l[10] = 0; s[10] = 2; t[10] = 2; 
x[10] = 1; y[10] = 1;\\
&m [11] = 2; n [11] = 0; k[11] = 1; l[11] = 0; s[11] = 2; t[11] = 1; 
x[11] = 1; y[11] = 2;\\
&m [12] = 2; n [12] = 0; k[12] = 1; l[12] = 0; s[12] = 1; t[12] = 2; 
x[12] = 2; y[12] = 1;\\
&m [13] = 0; n [13] = 1; k[13] = 0; l[13] = 2; s[13] = 1; t[13] = 0; 
x[13] = 2; y[13] = 0;\\
&m [14] = 0; n [14] = 1; k[14] = 0; l[14] = 2; s[14] = 2; t[14] = 0; 
x[14] = 1; y[14] = 0;\\
&m [15] = 0; n [15] = 1; k[15] = 0; l[15] = 2; s[15] = 1; t[15] = 1; 
x[15] = 2; y[15] = 2;\\
&m [16] = 0; n [16] = 1; k[16] = 0; l[16] = 2; s[16] = 2; t[16] = 2; 
x[16] = 1; y[16] = 1;\\
&m [17] = 0; n [17] = 1; k[17] = 0; l[17] = 2; s[17] = 2; t[17] = 1; 
x[17] = 1; y[17] = 2;\\
&m [18] = 0; n [18] = 1; k[18] = 0; l[18] = 2; s[18] = 1; t[18] = 2; 
x[18] = 2; y[18] = 1;\\
&m [19] = 0; n [19] = 2; k[19] = 0; l[19] = 1; s[19] = 1; t[19] = 0; 
x[19] = 2; y[19] = 0;\\
&m [20] = 0; n [20] = 2; k[20] = 0; l[20] = 1; s[20] = 2; t[20] = 0; 
x[20] = 1; y[20] = 0;\\
&m [21] = 0; n [21] = 2; k[21] = 0; l[21] = 1; s[21] = 1; t[21] = 1; 
x[21] = 2; y[21] = 2;\\
&m [22] = 0; n [22] = 2; k[22] = 0; l[22] = 1; s[22] = 2; t[22] = 2; 
x[22] = 1; y[22] = 1;\\
&m [23] = 0; n [23] = 2; k[23] = 0; l[23] = 1; s[23] = 2; t[23] = 1; 
x[23] = 1; y[23] = 2;\\
&m [24] = 0; n [24] = 2; k[24] = 0; l[24] = 1; s[24] = 1; t[24] = 2; 
x[24] = 2; y[24] = 1;\\
&m [25] = 1; n [25] = 1; k[25] = 2; l[25] = 2; s[25] = 1; t[25] = 0; 
x[25] = 2; y[25] = 0;\\
&m [26] = 1; n [26] = 1; k[26] = 2; l[26] = 2; s[26] = 2; t[26] = 0; 
x[26] = 1; y[26] = 0;\\
&m [27] = 1; n [27] = 1; k[27] = 2; l[27] = 2; s[27] = 0; t[27] = 1; 
x[27] = 0; y[27] = 2;\\
&m [28] = 1; n [28] = 1; k[28] = 2; l[28] = 2; s[28] = 0; t[28] = 2; 
x[28] = 0; y[28] = 1;\\
&m [29] = 1; n [29] = 1; k[29] = 2; l[29] = 2; s[29] = 2; t[29] = 1; 
x[29] = 1; y[29] = 2;\\
&m [30] = 1; n [30] = 1; k[30] = 2; l[30] = 2; s[30] = 1; t[30] = 2; 
x[30] = 2; y[30] = 1;\\
&m [31] = 2; n [31] = 2; k[31] = 1; l[31] = 1; s[31] = 1; t[31] = 0; 
x[31] = 2; y[31] = 0;\\
&m [32] = 2; n [32] = 2; k[32] = 1; l[32] = 1; s[32] = 2; t[32] = 0; 
x[32] = 1; y[32] = 0;\\
&m [33] = 2; n [33] = 2; k[33] = 1; l[33] = 1; s[33] = 0; t[33] = 1; 
x[33] = 0; y[33] = 2;\\
&m [34] = 2; n [34] = 2; k[34] = 1; l[34] = 1; s[34] = 0; t[34] = 2; 
x[34] = 0; y[34] = 1;\\
&m [35] = 2; n [35] = 2; k[35] = 1; l[35] = 1; s[35] = 2; t[35] = 1; 
x[35] = 1; y[35] = 2;\\
&m [36] = 2; n [36] = 2; k[36] = 1; l[36] = 1; s[36] = 1; t[36] = 2; 
x[36] = 2; y[36] = 1;\\
&m [37] = 2; n [37] = 1; k[37] = 1; l[37] = 2; s[37] = 1; t[37] = 0; 
x[37] = 2; y[37] = 0;\\
&m [38] = 2; n [38] = 1; k[38] = 1; l[38] = 2; s[38] = 2; t[38] = 0; 
x[38] = 1; y[38] = 0;\\
&m [39] = 2; n [39] = 1; k[39] = 1; l[39] = 2; s[39] = 0; t[39] = 1; 
x[39] = 0; y[39] = 2;\\
&m [40] = 2; n [40] = 1; k[40] = 1; l[40] = 2; s[40] = 0; t[40] = 2; 
x[40] = 0; y[40] = 1;\\
&m [41] = 2; n [41] = 1; k[41] = 1; l[41] = 2; s[41] = 1; t[41] = 1; 
x[41] = 2; y[41] = 2;\\
&m [42] = 2; n [42] = 1; k[42] = 1; l[42] = 2; s[42] = 2; t[42] = 2; 
x[42] = 1; y[42] = 1;\\
&m [43] = 1; n [43] = 2; k[43] = 2; l[43] = 1; s[43] = 1; t[43] = 0; 
x[43] = 2; y[43] = 0;\\
&m [44] = 1; n [44] = 2; k[44] = 2; l[44] = 1; s[44] = 2; t[44] = 0; 
x[44] = 1; y[44] = 0;\\
&m [45] = 1; n [45] = 2; k[45] = 2; l[45] = 1; s[45] = 0; t[45] = 1; 
x[45] = 0; y[45] = 2;\\
&m [46] = 1; n [46] = 2; k[46] = 2; l[46] = 1; s[46] = 0; t[46] = 2; 
x[46] = 0; y[46] = 1;\\
&m [47] = 1; n [47] = 2; k[47] = 2; l[47] = 1; s[47] = 1; t[47] = 1; 
x[47] = 2; y[47] = 2;\\
&m [48] = 1; n [48] = 2; k[48] = 2; l[48] = 1; s[48] = 2; t[48] = 2; 
x[48] = 1; y[48] = 1;
\end{align*}

We use a ``for loop" to reduce $z$ symbolically:
\begin{align*}
&\text{For}[i=1,i<49,i\text{++}, \\ 
&\text{Print}[ i];\text{Print} [\text{Reduce}[\{z t(u^2-1) \varphi J'[1,2] ==f[\varphi J'[0,1],\varphi J'[1,1]], \\ 
&\text{  }z(u-1)\varphi J'[2,0]\text{==}f[\varphi J'[0,1],\varphi J'[2,2]],u\neq 0,\alpha \neq 0,\beta \neq 0,\gamma \neq 0, \\ 
&\delta \neq 0 \},z ] ] ].
\end{align*}

\subsection{Verifying the automorphism $\psi$ } \label{psi}
The map $\psi$ was defined on the basis of $J_3(1, z^2)$ as follows:
\begin{align*}
	&J''(1, 0) \mapsto - J'(1, 0),  && J''(2, 0) \mapsto - J'(2, 0), \\ & J''(0, 1) \mapsto -z J'(0, 2), && J''(0, 2) \mapsto - J'(0, 1), \\ & J''(1, 1) \mapsto \frac{z}{1 + u} J'(1, 2), && J''(2, 2) \mapsto \frac{1}{1 + u} J'(2, 1) \\ & J''(1, 2) \mapsto (1 + u) J'(1, 1) && J''(2, 1) \mapsto z(1 + u)J'(2, 2).
\end{align*}
Then 
{ \small
\begin{enumerate}[(1)]
	\item $\psi([J''(1, 0), J''(0, 1)]) = z(1 - u)J'(1, 2) = [\psi(J''(1, 0)), \psi(J''(0, 1))]$,
	\item $\psi([J''(1, 0), J''(0, 2)]) = (1 - u^2)J'(1, 1) = [\psi(J''(1, 0)), \psi(J''(0, 2))]$,
	\item  $
	\psi([J''(1, 0), J''(1, 1)]) = z(1 - u^2)(1 + u)J'(2, 2) = [\psi(J''(1, 0)), \psi(J''(1, 1))]
	$,
	\item  $
	\psi([J''(1, 0), J''(2, 2)]) = -(1 - u)J'(0, 2) = [\psi(J''(1, 0)), \psi(J''(2, 2))]
	$,
	\item  $
	\psi([J''(1, 0), J''(1, 2)]) = -(1 + u)(1 - u^2)J'(2, 1) = [\psi(J''(1, 0)), \psi(J''(1, 2))]
	$,
	\item  $
	\psi([J''(1, 0), J''(2, 1)]) = -z(1 - u^2)J'(0, 2) = [\psi(J''(1, 0)), \psi(J''(2, 1))]
	$,
	\item $\psi([J''(2, 0), J''(0, 1)]) = z(1 - u^2)J'(2, 2) = [\psi(J''(2, 0)), \psi(J''(0, 1))]$,
	\item  $\psi([J''(2, 0), J''(0, 2)]) = (1 - u)J'(2, 1) = [\psi(J''(2, 0)), \psi(J''(0, 2))]$,
	\item  $\psi([J''(2, 0), J''(1, 1)]) = -z(1 - u)J'(0, 2) = [\psi(J''(2, 0)), \psi(J''(1, 1))]$,
	\item  $\psi([J''(2, 0), J''(2, 2)]) = (1 - u^2)(1 + u)J'(1, 1) = [\psi(J''(2, 0)), \psi(J''(2, 2))]$,
	\item  $\psi([J''(2, 0), J''(1, 2)]) = -(1 - u^2)J'(0, 1) = [\psi(J''(2, 0)), \psi(J''(1, 2))]$,
	\item  $\psi([J''(2, 0), J''(2, 1)]) = -z(1 + u)(1 - u^2)J'(1, 2) = [\psi(J''(2, 0)), \psi(J''(2, 1))]$,
	\item  $\psi([J''(0, 1), J''(1, 1)]) = -z^2(u - 1)J'(2, 0) = [\psi(J''(0, 1)), \psi(J''(1, 1))]$,
	\item  $\psi([J''(0, 1), J''(1, 2)]) = -z^2(u^2 - 1)J'(2, 0) = [\psi(J''(0, 1)), \psi(J''(1, 2))]$,
	\item  $\psi([J''(0, 1), J''(2, 1)]) = -z^2(u^2 - 1)(1 + u)J'(2, 1) = [\psi(J''(0, 1)), \psi(J''(2, 1))]$,
	\item  $\psi([J''(0, 2), J''(1, 1)]) = -z^2(u - 1)J'(1, 0) = [\psi(J''(0, 2)), \psi(J''(1, 1))]$,
	\item  $\psi([J''(0, 2), J''(2, 2)]) = z(1 + u)(u^2 - 1)J'(2, 2) = [\psi(J''(0, 2)), \psi(J''(2, 2))]$,
	\item  $\psi([J''(0, 2), J''(1, 2)]) = -z(1 + u)(u^2 - 1)J'(1, 2) = [\psi(J''(0, 2)), \psi(J''(1, 2))]$,
	\item  $\psi([J''(0, 2), J''(2, 1)]) = -z^2(u^2 - 1)J'(2, 0) = [\psi(J''(0, 2)), \psi(J''(2, 1))]$,
	\item  $\psi([J''(1, 1), J''(1, 2)]) = z^2(u - u^2)J'(2, 0) = [\psi(J''(1, 1)), \psi(J''(1, 2))]$,
	\item  $\psi([J''(1, 1), J''(2, 1)]) = -z^2(u - u^2)J'(0, 1) = [\psi(J''(1, 1)), \psi(J'' (2, 1))]$,
	\item  $\psi([J''(2, 2), J''(1, 2)]) = z(u^2 - u)J' (0, 2) = [\psi(J''(2, 2)), \psi(J''(1, 2))]$,
	\item  $\psi([J''(2, 2), J''(2, 1)]) = -z^2(u^2 - u)J' (1, 0) = [\psi(J''(2, 2)), \psi(J''(2, 2))]$.
\end{enumerate}}

\begin{thebibliography}{99}
		
\bibitem{BZ16}
A. Bondal, I. Zhdanovskiy,  Orthogonal pairs and mutually unbiased bases, 
J. Math. Sci., {\bf 216} (2016), 23--40. 
  
        \bibitem{BZ19}
  A. Bondal, I. Zhdanovskiy, Symplectic geometry of unbiasedness
and critical points of a potential, Primitive Forms and Related Subjects—Kavli IPMU 2014, {\bf 83} (2019), 1--18.

   \bibitem{BYZ21}
    A. I. Bondal, I. Y. Zhdanovskiy, Theory of homotopes with applications to mutually unbiased bases, harmonic analysis on graphs, and perverse
sheaves, Russian Math. Surveys, {\bf 76} (2021), No. 2, 195--259.
	
	\bibitem{BS07}
	P. O. Boykin, M. Sitharam, P. H. Tiep, P. Wocjan, Mutually unbiased bases and orthogonal decompositions of Lie algebras, Quantum Inf. Comput., {\bf 7} (2007), 371--382.

        \bibitem{D72}
        S. P. Demushkin, Cartan subalgebras of simple nonclassical Lie p-algebras, Izv. Akad. Nauk SSSR Ser. Mat., {\bf 36} (1972), No. 5, 915–932.

	\bibitem{DE10}
	T. Durt, B. G. Englert, I. Bengtsson, K. Zyczkowski, On mutually unbiased bases. 
	Int. J. Quantum Inform., {\bf 8} (2010), No. 4, 535--640.
	
	\bibitem{H72}
	J. E. Humphreys, Introduction to Lie Algebras and Representation Theory. Vol. 9, Graduate texts in mathematics. New York (NY), Springer-Verlag, 1972.
	
\bibitem{KZ21}
	A. S. Kocherova, I. Y. Zhdanovskiy, Some algebraic and geometric constructions in quantum information theory, AIP Conference Proceedings, {\bf 2362} (2021), No.1.
 
	\bibitem{KK81}
	A. I. Kostrikin, I. A. Kostrikin, V. A. Ufnarovskii, Orthogonal decompositions of simple Lie algebras (type $A_n$), Trudy Mat. Inst. Steklov., {\bf 158} (1981), 105--120.
	
	\bibitem{KK83}
	A. I. Kostrikin, I. A. Kostrikin, V. A. Ufnarovskii, On the uniqueness of orthogonal decompositions of Lie algebras of type $A_n$ and $C_n$, Mat. Issled., {\bf 74} (1983), 80--105.
	
	\bibitem{KK84}
	A. I. Kostrikin, I. A. Kostrikin, V. A. Ufnarovskii, On decompositions of classical Lie algebras, Trudy Mat. Inst. Steklov., {\bf 166} (1984), 117--134.
	
	\bibitem{KT94}
	A. I. Kostrikin, P. H. Tiep, Orthogonal Decompositions and Integral Lattices. Berlin, New York, Walter de Gruyter, 1994.
	
	

        \bibitem{N06}
       R. Nicoara, A finiteness result for commuting squares of matrix algebras, J. Operator Theory,  {\bf 55} (2006), No. 2, 295--310.
 
	\bibitem{R09}
	M. B. Ruskai,
	Some connections between frames, mutually unbiased bases, and POVM's in quantum information theory,
	Acta Appl. Math. {\bf 108} (2009), No. 3, 709--719.
	
	\bibitem{S67}
	G. B. Seligman, Modular Lie Algebras,  Vol. 40, Ergebnisse der Mathematik und ihrer Grenzgebiete, New York (NY), 
	Springer-Verlag, 1967.
	
	\bibitem{S23}
	S. Sriwongsa,
	Orthogonal decompositions of Lie algebras over finite commutative rings, J. Algebra Appl., {\bf 21} (2022), No. 12,
    p.2350006.

	\bibitem{SZ20}
	S. Sriwongsa, Y. M. Zou, Orthogonal Cartan subalgebra decomposition of $\mathfrak{sl}_n$ over a finite commutative ring, Linear Multilinear Algebra, {\bf 67} (2020), 817--829.

\bibitem{TW20}
	W. Thiel, N. Williams, Strange expectations and the Winnie-the-Pooh problem, J. Comb. Theory Ser. A., {\bf 176} (2020), 105298.
 
	\bibitem{T76}
	J. G. Thompson,
	A conjugacy theorem for $E_8$, 
	J. Algebra, {\bf 38} (1976), No. 2, 525--530.
	
	\bibitem{Th76}
	J. G. Thompson, A simple subgroup of $E_8(3)$, in {\it Finite Groups Symposium}, ed. N. Iwahori (Japan Society for the Promotion of Science, 1976), p. 113--116.
	
	
	
	\bibitem{ZW02}
	Z. Wan, Geometry of Classical Groups over Finite Fields, 2nd Edition. Beijing, New York, Science Press, 2002.	

    \bibitem{Z23}
    E. Zelenov, On Geometry of $p$-Adic Coherent States and Mutually Unbiased Bases, Entropy, {\bf 25} (2023), No. 6, 902.
  
        \bibitem{ZK19}
  I. Y. Zhdanovskiy, A. S. Kocherova, Algebras of projectors and mutually unbiased bases in dimension 7, J. Math. Sci., {\bf 241} (2019), No. 2, 125--157.
\end{thebibliography}
\end{document}